\documentclass[a4paper,leqno]{amsart}
\usepackage[english]{babel}

\usepackage{amsmath,amstext,amssymb,amsthm}
\usepackage{mathrsfs}
\usepackage{bbm}
\usepackage{enumitem}

\usepackage{hyperref}
\hypersetup{colorlinks = true, urlcolor = magenta, linkcolor = blue, citecolor = red}

\newcommand*\RR{\mathbb{R}}
\newcommand*\CC{\mathbb{C}}
\newcommand*\NN{\mathbb{N}}
\newcommand*\ZZ{\mathbb{Z}}

\newcommand*\CG{C_G}

\newcommand*{\tc}{\mathrel{:}}

\newcommand*\dd{\mathrm{d}}
\newcommand*\al{\alpha}
\newcommand*\be{\beta}

\newcommand*\hOC{\mathcal{H}^{\rm OC}}
\newcommand*\HDu{\mathcal{H}^{\rm D}}

\newcommand*\Eucl{E}

\theoremstyle{plain}
\newtheorem{mainthm}{Theorem}
\newtheorem{thm}{Theorem}[section]
\newtheorem{lm}[thm]{Lemma}
\newtheorem{prop}[thm]{Proposition}

\theoremstyle{remark}
\newtheorem{rem}[thm]{Remark}

\numberwithin{equation}{section}

\usepackage[dvipsnames]{xcolor}

\newcounter{comcount}

\begin{document}
\title[Sharp heat kernel estimates]{Sharp estimates for the Jacobi and trigonometric Dunkl heat kernels}

\author[P. Plewa]{Pawe\l{} Plewa}
\address[P. Plewa]{Department of Pure and Applied Mathematics, 
	Wroc{\l}aw University of Science and Technology\\ wyb. Wys\-pia{\'n}\-skie\-go  27\\ 50–-370 Wroc{\l}aw\\ Poland }        
\email{pawel.plewa@pwr.edu.pl}

\begin{abstract}
We prove sharp both-sided estimates for the $W$-invariant trigonometric Dunkl heat kernel in rank one case, including root systems $A_1$ and $BC_1$. Consequently, we obtain sharp bounds for the non-compact Jacobi heat kernel in the full range of the type parameters. 
\end{abstract}

\subjclass[2020]{58J35, 33C52} 

\keywords{Opdam--Cherednik Laplacian, trigonometric Dunkl setting, non-compact Jacobi functions, sharp heat kernel estimates, parabolic minimum principle}

\thanks{The author was partially supported by the National Science Centre (NCN) Poland, grant no. 2025/59/B/ST1/01786.}

\maketitle

\section{Introduction}

The study of harmonic analysis associated with root systems has its historical origins in the theory of Riemannian symmetric spaces of non-compact type. While classical harmonic analysis on symmetric spaces $G/K$ is inherently tied to discrete, geometrically determined root multiplicities, the groundbreaking work of Heckman and Opdam extended this framework to arbitrary root systems with continuous multiplicity parameters. Later, Cherednik introduced commuting first-order differential-difference operators, known as the trigonometric Dunkl operators. We refer the reader to the foundational works of Heckman, Opdam and Cherednik \cite{Che91, Che94,Hec90,Hec91,HecOpd,Opd96}. 

Central to this framework is the Opdam--Cherednik Laplacian $\Delta^{\rm OC}$, a second-order differential-difference operator that plays the role of the Laplace-Beltrami operator. A major goal in this area is to understand the associated heat semigroup $e^{t\Delta^{\rm OC}}$ and its integral kernel, called the Opdam--Cherednik heat kernel or the trigonometric Dunkl kernel. The heat kernel provides the fundamental solution to the heat equation, and it serves as a primary tool for studying potential theory and Riesz transforms.

Finding sharp, pointwise estimates for the Dunkl heat kernels (rational and trigonometric) is a challenging problem. In the rational Dunkl setting, Dziuba\'nski and Hejna \cite{DziHej_CVPDE'23} showed qualitatively sharp upper and lower estimates for the heat kernel. Qualitatively sharp means, that the constants appearing in the exponents of lower and upper bounds are different, therefore the polynomial terms can be ignored. The genuinely sharp estimates have been known for some time in the one-dimensional case and for the symmetric spaces of non-compact type \cite{AnkJi,AnkOst}. Recently, Anker and Trojan \cite{AnkTro} proved such estimates on the plane in the dihedral case, see also a paper by Graczyk and Sawyer on $A_2$ case \cite{GraSaw:A_2}. 
 
It seems to be slightly more reachable to study the $W$-invariant parts of the Dunkl Laplacians and the associated heat kernels. Graczyk and Sawyer obtained estimates for the rational $W$-invariant heat kernel in the $A_n$ case  \cite{GraSaw:W-inv_D}. In the trigonometric Dunkl setting much less is known. Schapira \cite[Thm. 5.2]{Scha} proved sharp bounds for the $W$-invariant heat kernel at the point $(x,0)$. Since the kernel is not a kernel of convolution operator, this is only a partial result, as Schapira directly underlines in his work.

In this paper we prove genuinely sharp both-sided estimates for the $W$-invariant trigonometric Dunkl kernel $\hOC_t(x,t)$ in rank $1$. Unlike in the rational case, in trigonometric setting there are two distinct crystallographic root systems of rank $1$: $\{-1,1 \}$ and $\{-2,-1,1,2\}$ called $A_1$ and $BC_1$, respectively.

Our main result, stated for the root system $BC_1$, is the following. 

\begin{mainthm}\label{mainthm}
	Let $k_1,k_2\geq 0$ be such that $k_1+k_2>0$. The $W$-invariant Opdam--Cherednik heat kernel satisfies the bound
	\begin{equation*}
		\hOC_t(x,y)\simeq t^{-1/2} e^{-\frac{(x-y)^2}{4t}} e^{-\rho^2 t} e^{-\rho(x+y)} (1+x)(1+y) \frac{(t+1+x+y+xy)^{k_1+k_2-1}}{(t+xy)^{k_1+k_2}},
	\end{equation*}
	uniformly in $t>0$ and $x,y>0$, where $\rho=k_1/2+k_2$.
\end{mainthm}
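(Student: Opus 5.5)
Write $\mathcal{D}_t(x,y)$ for the right-hand side of the Main Theorem. In rank one the $W$-invariant Opdam--Cherednik Laplacian restricted to even functions is the Jacobi operator
\[
L=\frac{d^2}{dx^2}+\bigl(k_1\coth x+2k_2\coth 2x\bigr)\frac{d}{dx},
\]
acting on $(0,\infty)$ with weight $A(x)=\sinh^{k_1+k_2}x\,\cosh^{k_2}x$, up to normalisation. Hence $\hOC_t$ is the Jacobi heat kernel $p_t(x,y)$ with $(\alpha,\beta)=(k_1+k_2-\tfrac12,k_2-\tfrac12)$. For this kernel I take as given the estimate at $y=0$,
\[
p_t(x,0)\simeq t^{-1/2}e^{-x^2/4t}e^{-\rho^2t}e^{-\rho x}(1+x)(t+1+x)^{k_1+k_2-1}t^{-(k_1+k_2)},
\]
which is Schapira's Theorem~5.2. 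It is consistent with the formula at $y=0$. The proof then has three steps.

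\textbf{Step 1: ground-state transform.} Conjugate away $e^{-\rho^2 t}$ and the weight. With $u=A^{1/2}f$ the operator becomes $\partial_x^2-V(x)$, where $V=\rho^2+\tfrac{(k_1+k_2)^2-(k_1+k_2)}{x^2}$-type singularity at $0$ plus an exponentially small correction at infinity. The heat kernel of $\partial_x^2-\rho^2-c/x^2$ on $(0,\infty)$ is an explicit Bessel-process kernel: $t^{-1}(xy)^{1/2}e^{-(x^2+y^2)/4t}I_\nu(xy/2t)$ with $\nu=k_1+k_2-\tfrac12$. The known sharp behaviour of $I_\nu$ gives
\[
t^{-1/2}e^{-(x-y)^2/4t}\Bigl(\frac{xy}{t+xy}\Bigr)^{\nu+1/2}.
\]
Dividing by $(A(x)A(y))^{1/2}\asymp \bigl(\tfrac{x}{1+x}\tfrac{y}{1+y}\bigr)^{k_1+k_2}e^{\rho(x+y)}$ recovers the factors $e^{-\rho(x+y)}(t+xy)^{-(k_1+k_2)}$ and the local part of $(1+x)(1+y)$. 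The remaining polynomial factor $(1+x)(1+y)(t+1+x+y+xy)^{k_1+k_2-1}$ must come from the comparison between $V$ and the model potential near infinity: there $V\to\rho^2$ exponentially fast, but the $x^{-2}$ model term does not.

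\textbf{Step 2: comparison via the parabolic minimum principle.} This is where I expect the main obstacle. The plan is to build explicit barriers $\mathcal{D}^{\pm}_t(x,y)=C^{\pm}\mathcal{D}_t(x,y)$ with modified Gaussian constants, $e^{-(x-y)^2/4t}$ times perturbations like $(1\pm\varepsilon\,\text{correction})$, such that $(\partial_t-L_x)\mathcal{D}^{+}\ge0\ge(\partial_t-L_x)\mathcal{D}^{-}$. These are then compared with $p_t(\cdot,y)$ on domains $\{x>0,\ t>0\}$. The data are the initial delta, controlled by the small-time local Euclidean asymptotics, and the boundary behaviour at $x=0$, controlled by Schapira's bound and symmetry of $p_t$. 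Computing $(\partial_t-L)\mathcal{D}_t$ directly is messy, so I would split into regions:
\begin{itemize}
\item $t\lesssim1$, or $x,y\lesssim1$: the local regime, where Step 1 already suffices;
\item $xy\lesssim t$: the kernel behaves like $p_t(x,0)p$-type, and a Harnack/semigroup argument reduces it to Schapira's estimate;
\item $xy\gg t$ with $x,y$ large: the genuinely new regime.
\end{itemize}
In the last regime one checks that $g(x)=(1+x)(t+1+x+y+xy)^{k_1+k_2-1}$ is an approximate eigenfunction. The ground state is $\varphi_0\sim(1+x)e^{-\rho x}$, which explains the factor $(1+x)$ and the linear growth. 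The error terms must then be absorbed by a time-shift $t\mapsto(1\pm\delta)t$ in the Gaussian.

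\textbf{Step 3: gluing the regions.} Use the semigroup property $p_{t}(x,y)=\int p_{t/2}(x,z)p_{t/2}(z,y)A(z)\,dz$ to pass between regions. For the lower bound, restrict the integral to a neighbourhood of the geodesic midpoint $z\approx(x+y)/2$ of width $\sqrt t$. For the upper bound, split the integral and use the estimates from the other regions together with the elementary Gaussian convolution identity. The delicate bookkeeping is verifying that the polynomial factor $(t+1+x+y+xy)^{k_1+k_2-1}$ reproduces itself under this convolution, uniformly in all parameters. I would check this first in the model case $k_2=0$, $k_1=1$, where the exponent vanishes.
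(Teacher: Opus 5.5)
\textbf{There is a genuine gap, and it sits in Step~2.} That step carries the whole difficulty of the theorem ($t\ge 1$, $xy\gg t$), and you leave it unconstructed. Worse, the one concrete mechanism you name cannot give the stated result.

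The bound is genuinely sharp: the same Gaussian $e^{-(x-y)^2/4t}$ appears on both sides. A time dilation $t\mapsto(1\pm\delta)t$ in the Gaussian produces $e^{-(x-y)^2/4(1\pm\delta)t}$. This is not comparable to $e^{-(x-y)^2/4t}$ as $(x-y)^2/t\to\infty$, so at best you would get a qualitatively sharp estimate.

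To keep the exact Gaussian, you need barriers with $DH^\pm/H^\pm=J_1+J_2$, where:
\begin{itemize}
\item $|J_1|\lesssim t^{-3/2}$, which is integrable on $[1,\infty)$ and hence absorbed by the bounded factor $e^{\pm A_2/\sqrt t}$;
\item $J_2$ has the favourable sign.
\end{itemize}
Multiples of the target cannot serve, since constants do not change $D\mathcal{D}/\mathcal{D}$. By Lemma~\ref{lm:gen_form} with $\psi=\psi_0$ and $k=k_1+k_2$, for fixed $0<y\le x$ and $t\to\infty$,
\[
\frac{DH^0_t(x,y)}{H^0_t(x,y)}=\frac1t\Big[\frac{G(x)(x+y+2-2k)}{x}-1\Big]+O(t^{-2}).
\]
This is a non-integrable $1/t$ term that does not vanish; for $k\neq 1$ it even behaves like $1/(xt)$ as $x\to0$. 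So it cannot be both $\gtrsim -t^{-3/2}$ and $\lesssim t^{-3/2}$.

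\textbf{What is missing} are exactly the devices the paper uses to cancel these $1/t$ terms or give them a sign:
\begin{itemize}
\item the profile $(2t+xy)^{-1}$, where the factor $2$ removes a positive $y/(x(t+xy))$ term and leaves $(G(x)-1)/t$;
\item the correction $e^{\pm\CG\sqrt{x^2+1}/t}$, whose contribution $\mp\frac{\CG}{t\sqrt{x^2+1}}\big(2G(x)+\frac{1}{x^2+1}\big)$ dominates $|G(x)-1|/t\le K/(t(1+x))$ by Lemma~\ref{lm:G} and \eqref{eq:CG};
\item a separate profile $\psi_1$ for $x\ge 2t$, with error $O(t^{-2})$ (Lemma~\ref{lm:Hpsi1});
\item the gluing via $\chi(x/t)$, where the cross term $\frac{y}{t^2}\chi'\,(C_1^\pm H^{\psi_1}_t-H^{\psi_2^\pm}_t)$ is only $O(1/t)$ and gets its sign from the choice of $C_1^\pm$.
\end{itemize}
The paper then applies Lemma~\ref{lm:PMP} on $[y,\infty)\times[1,T]$. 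This needs the on-diagonal estimate of Proposition~\ref{prop:diagonal} as boundary data, proved via the Harish-Chandra expansion of $\varphi_{i\lambda}$ and a contour shift for $x\ge a\sqrt t$. Working on $\{x>0\}$ avoids the diagonal, but then your barriers must also handle $x<y$, where arguments such as $xy\ge t\Rightarrow x\ge\sqrt t$ are unavailable.

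Step~3 does not substitute for any of this. Restricting $\int p_{t/2}(x,z)p_{t/2}(z,y)A(z)\,dz$ to $z$ near the midpoint needs sharp bounds for $p_{t/2}$ in the same regime (for $x\approx y$ large, $z\approx x$), so the argument is circular. Iterating over dyadic times multiplies the constants at each step, which gives losses growing with $t$.

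\textbf{The remaining parts are closer to the paper but need correcting.}
\begin{itemize}
\item Step~1 is the paper's small-time argument (Lemma~\ref{lm:L,L_Dunkl}, Proposition~\ref{prop:small_t}). The conjugated operator differs from the Bessel/rational Dunkl model by the bounded potential $R_{k_1,k_2}$, and the minimum principle with factors $e^{\pm Ct}$ transfers the model bound. This works only for $t\lesssim 1$; it says nothing about $x,y\lesssim 1$ with $t$ large.
\item For $xy\lesssim t$, a Harnack argument would lose in the Gaussian exponent. The right tool is the product formula \eqref{eq:hOC-Jacobi}. Since $W\ge 0$ is supported in $z\in[|x-y|,x+y]$, on that interval $e^{-z^2/4t}$ changes at most by the factor $e^{-xy/t}$ and $(1+t+z)^{k-1}$ is essentially constant. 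So Schapira's bound for $\hOC_t(z,0)$ together with \eqref{eq:phi_prod} gives the sharp estimate throughout $xy\lesssim t$. The paper uses this argument on the diagonal for $x\le a\sqrt t$.
\item For $xy\gtrsim t$, the target is already comparable to your conjugated Bessel expression. The task there is therefore not to generate the factor $(t+1+x+y+xy)^{k-1}$. It is to keep the comparison uniform for all $t\ge 1$, even though the potentials differ by an $O(1)$ amount whose naive effect is $e^{\pm Ct}$. That is exactly what the finely tuned barriers are for.
\end{itemize}
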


Here the multiplicity $k_1$ corresponds to the positive root $\al=1$, and $k_2$ corresponds to $\al=2$. By choosing $k_2=0$ one reduces the situation to the $A_1$ case. Similarly, taking $k_1=0$ yields a modified $A_1$ system $\{-2,2\}$, cf. \cite{GraSaw25}.

The trigonometric Dunkl setting in rank $1$ with multiplicities $k_1,k_2$ as above, overlaps with the non-compact Jacobi setup with the type parameters $\al=k_1+k_2-1/2$, $\beta=k_2-1/2$, see Section \ref{S:Jacobi} for more details. A direct consequence of Theorem \ref{mainthm} is the sharp bound for the non-compact Jacobi heat kernel.

\begin{mainthm}\label{thm:Jacobi}
	Let $\al\geq\beta\geq -1/2$, $\al>-1/2$. The non-compact Jacobi heat kernel satisfies  the bound
	\begin{multline*}
		\mathcal{H}_t^{\al,\beta}(x,y)\simeq t^{-1/2} e^{-\frac{(x-y)^2}{4t}} e^{-(\al+\beta+1)^2 t} e^{-(\al+\beta+1)(x+y)}\\
		\times (1+x)(1+y) \frac{(t+1+x+y+xy)^{\al-1/2}}{(t+xy)^{\al+1/2}},
	\end{multline*}
	uniformly in $t>0$ and $x,y>0$.
\end{mainthm}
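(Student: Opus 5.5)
The plan is to deduce Theorem \ref{thm:Jacobi} from Theorem \ref{mainthm} by identifying the two heat kernels through the dictionary $\al=k_1+k_2-1/2$, $\beta=k_2-1/2$, i.e.
\[
k_2=\beta+\tfrac12,\qquad k_1=\al-\beta .
\]
The hypotheses $\al\ge\beta\ge-1/2$ and $\al>-1/2$ become $k_1\ge0$, $k_2\ge0$ and $k_1+k_2=\al+1/2>0$, which are exactly the assumptions of Theorem \ref{mainthm}.

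First I will recall the standard facts from Section \ref{S:Jacobi}. The $W$-invariant part of $\Delta^{\rm OC}$ in rank one acts on even functions as the Jacobi operator. In the normalization needed to match, this reads
\[
\frac{d^2}{dx^2}+\bigl(k_1\coth\tfrac{x}{2}+2k_2\coth x\bigr)\frac{d}{dx},
\]
possibly after the rescaling $x\mapsto 2x$. It agrees with the Jacobi operator
\[
\frac{d^2}{dx^2}+\bigl((2\al+1)\coth x+(2\beta+1)\tanh x\bigr)\frac{d}{dx}
\]
up to the change of variables and a constant factor, because
\[
(2\al+1)\coth x+(2\beta+1)\tanh x=(\al-\beta)\bigl(\coth x+\tanh x\bigr)+2(\beta+1/2)(\coth x+\tanh x)\cdots
\]
Here I use the identity $\coth x+\tanh x=2\coth 2x$ and $\coth x$ itself. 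The measures also match: $|\sinh\tfrac{x}{2}|^{2k_1}|\sinh x|^{2k_2}$ corresponds to $(2\sinh x)^{2\al+1}(2\cosh x)^{2\beta+1}$ up to constants. The paper's normalization is chosen so that $\rho=k_1/2+k_2$ equals $\al+\beta+1$, and I check this directly: $k_1/2+k_2=(\al-\beta)/2+\beta+1/2$. Since this differs from $\al+\beta+1$, the convention must involve a factor of $2$ (root $\al=1$ versus the Jacobi variable). I will fix that convention from Section \ref{S:Jacobi}, so that $\mathcal H^{\al,\beta}_t(x,y)=c\,\hOC_{t'}(x',y')$ with an explicit linear rescaling of time and space.

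Next comes the substitution. All factors in Theorem \ref{mainthm} are invariant, up to constants, under linear rescalings $x\mapsto\lambda x$, $t\mapsto\lambda^2 t$:
\begin{itemize}
\item the Gaussian factor $e^{-(x-y)^2/4t}$ is exactly invariant;
\item $(1+x)$, $(t+1+x+y+xy)$ and $(t+xy)$ change only by bounded multiplicative constants;
\item $t^{-1/2}$ contributes only a constant.
\end{itemize}
The exponent $k_1+k_2-1=\al-1/2$ and the denominator exponent $k_1+k_2=\al+1/2$ transform correctly. The remaining exponentials $e^{-\rho^2t}e^{-\rho(x+y)}$ then become $e^{-(\al+\beta+1)^2t}e^{-(\al+\beta+1)(x+y)}$, provided the normalizations in Section \ref{S:Jacobi} are consistent.

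The main obstacle is precisely this bookkeeping of normalizations, namely the scaling of the variable, of time and of the measure. A mismatch could change the exponential constants, which cannot be absorbed into $\simeq$. I would verify the identification on the spherical functions: the Jacobi functions $\varphi_\lambda^{(\al,\beta)}$ equal the Heckman--Opdam hypergeometric functions with the matched parameters. The heat kernels, written as inverse spherical transforms of $e^{-t(\lambda^2+\rho^2)}$ against the respective Plancherel measures, then coincide exactly. This pins down $\rho\leftrightarrow\al+\beta+1$ without loss.
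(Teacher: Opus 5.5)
Your proposal is correct and follows the paper's own argument. With $k_1=\alpha-\beta$, $k_2=\beta+1/2$, the scaling relation \eqref{eq:OC-Jacobi}, namely $\mathcal{H}^{\alpha,\beta}_t(x,y)=4\hOC_{4t}(2x,2y)$ (your rescaling with $\lambda=2$), turns Theorem~\ref{mainthm} directly into Theorem~\ref{thm:Jacobi}: the exponentials match because $2\rho=k_1+2k_2=\alpha+\beta+1$, and the algebraic factors change only by constants.

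One slip needs correcting: your displayed drift identity is false as written. The correct decomposition is $(2\alpha+1)\coth x+(2\beta+1)\tanh x=2(\alpha-\beta)\coth x+(2\beta+1)(\coth x+\tanh x)=2k_1\coth x+4k_2\coth 2x$, and this is what makes $4L$, written in the variable $x=u/2$, equal to $-\Delta^{\alpha,\beta}$.
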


\subsection{Organization of the paper}

In Section \ref{S:prelimi} we recall the terminology used in the Dunkl theory, both rational and trigonometric. In Section \ref{S:rank1} we state certain preparatory results and introduce auxiliary notation. This part is restricted to rank $1$ case. Finally, in Section \ref{Section:proof} we prove Theorem \ref{mainthm}. We split the proof into four parts: small time, the diagonal, lower bound for large time, and upper bound for large time.

\subsection{Relation to \texorpdfstring{\cite{GraSaw25}}{}}
The sharp Opdam--Cherednik heat kernel estimates for the root system $A_1$ were studied by Graczyk and Sawyer in \cite{GraSaw25}. Unfortunately, the author of the present paper noticed certain inconsistencies in the paper, especially, that the obtained bound does not hold in the stated form. After much discussions with P. Graczyk during his visit in Wrocław in 2025 and unsuccessful attempts to fill the gaps, it was suggested by P. Graczyk that the author could attempt to prove the sharp estimates from scratch. 

The aim of this paper is to establish a rigorous, independent proof of the sharp two-sided estimates, while also extending the result to the broader, non-reduced setting of the $BC_1$ root system. In the Jacobi context, this means considering the full range of the type parameters $\al\geq\be\geq -1/2$, $\al>-1/2$, whereas the system $A_1$ corresponds to $\al=\be>-1/2$ (or, equivalently, to $\al>-1/2$, $\be=-1/2$).

For the completeness of the paper we point out the discrepancies in \cite{GraSaw25}:
\begin{enumerate}
	\item The function $u(r,s,t)$ defined on p. 194 does not satisfy $u(r,s,0)=0$, as claimed. Consequently, the bound in Region 0 does not stand in the stated form. Later on, in Section 3.6, the argument for the bound in the remaining regions relies on the one in Region 0, making the result incorrect both for small and large times.
	\item In Section 3.6 the boundary condition $h^+_{\rm ABCD}(r,r,t) - h_{\rm Opdam}(r,r,t)\geq 0$ was omitted. The symmetry mentioned in Section 3.1 can be only used after completing the bound for $r\geq s$.
	\item When gluing bounds between regions the argument in \cite{GraSaw25} is to check if the error term in the expressions for $D h$ (where $h$ is one of the various glued kernels) is $O(t^{-2})$, which holds readily because of the exponential decay in $t$. However, what should be checked is actually the error term for $D(h)/h$, which is much more involving. See for instance p. 202 and the bound for $D (h_{\rm D}^{\pm})$: if the correct condition is verified the error term is bounded by a constant, not by a multiplicity of $t^{-2}$.  
\end{enumerate}

\subsection{Notation}
We write $f\lesssim g$ is there exists $C>0$ independent of key parameters such that $f\leq C g$. If $f\lesssim g$ and $g\lesssim f$, then we write $f\simeq g$. In the paper all constants may depend on the multiplicities $k_1,k_2>0$. See also Section \ref{section:aux_not} for further notation.

\subsection*{Acknowledgments}
The author is grateful to Piotr Graczyk for introducing him to the trigonometric Dunkl setting, for many conversations on the subject, and for his kind hospitality during author's visit in Angers. Moreover, the author expresses many thanks to Angela Pasquale for sharing her knowledge about the spherical functions.

\section{Preliminaries}\label{S:prelimi}

In this section, we introduce the terminology and notation used in the Dunkl settings. For a general discussion concerning the trigonometric Dunkl setup we refer to original papers by Heckman, Cherednik and Opdam \cite{Che91,Che94,Hec90,Hec91,HecOpd,Opd96} and to the lecture notes by Opdam \cite{Opd_lec} and by Anker \cite{Ank_lec}. 

Let $\Eucl$ be an Euclidean space of dimension $d$ with inner product $\langle \cdot,\cdot\rangle$. A root system $\Sigma\subset \Eucl$ is a finite set of vectors such that  
\begin{itemize}
	\item $\Sigma$ spans $\Eucl$,
	\item for each $\al\in\Sigma$ there holds $\sigma_\al(\Sigma)=\Sigma$, where $\sigma_\al$ is the orthogonal reflection across the hyperplane perpendicular to $\al$, given by the formula
	\begin{equation*}
		\sigma_\al(x)= x - \frac{\langle x,\alpha\rangle}{\langle \al,\al\rangle} \al,
	\end{equation*}
	\item for each $\al,\beta\in\Sigma$ it holds that $\frac{2\langle\al,\beta\rangle}{\langle \al,\al\rangle}\in\ZZ$ (such root systems are called crystallographic).
\end{itemize}
We say that a root system is reduced if $\Sigma\cap\al\RR=\{\al,-\al\}$ for every $\al\in\Sigma$. If a crystallograhic root system is not reduced it may occur that $\al\in\Sigma$ and $2\al\in\Sigma$. The Weyl group $W$ is the subgroup of the orthogonal group $\mathcal{O}(\Eucl)$ generated by $\sigma_\al$, $\al\in\Sigma$. Moreover, let a $W$-invariant function $k\colon \Sigma\to [0,\infty)$ be the multiplicity function. The rank of $\Sigma$ is the dimension of $E$.

Let $v\in \Eucl$ be such that $\al(v)\tc= \langle\al,v\rangle\neq 0$ for $\al\in\Sigma$. We call $\al\in\Sigma$ a positve root if $\al(v)>0$. The set of all positive roots is denoted by $\Sigma_+$. The positive Weyl chamber is defined by
\begin{equation*}
	C_+=\big\{x\in E\colon \al(x)>0\ \forall\al\in\Sigma_+  \big\}
\end{equation*}

\subsection{Trigonometric Dunkl setting}

Let $k\colon\Sigma\to[0,\infty)$. The Dunkl--Cherednik operator $T_\xi$, $\xi\in\Eucl$, is defined on $C^1(\Eucl)$ by
\begin{equation*}
	T_\xi f(x) = \partial_\xi f(x) + \sum_{\al\in\Sigma_+}  k_\al \al(\xi) \frac{  f(x)-f(\sigma_\al(x))}{1-e^{-\al(x)}} - \rho(\xi)f(x),
\end{equation*}
where $\rho=\sum_{\al\in\Sigma_+} \frac{k_\al}{2} \al$. The associated measure is
\begin{equation*}
	\dd\mu(x) = \prod_{\al\in\Sigma_+} \Big| 2\sinh\frac{\al(x)}{2}\Big|^{2k_\al}.
\end{equation*}
Remarkably, the Dunkl--Cherednik operators commute, but are not skew-adjoint. The Opdam--Cherednik Laplacian is given by
\begin{equation*}
	\Delta^{\rm OC} f(x) = \Delta f(x) + \sum_{\al\in\Sigma_+} k_\al \coth\frac{\al(x)}{2} \partial_\al f(x) + |\rho|^2f(x) - \sum_{\al\in\Sigma_+} \frac{k_\al |\al|^2}{4\sinh^2\frac{\al(x)}{2}},
\end{equation*}
where $\Delta$ denotes the classical Laplacian in $\RR^d$. We shall consider the modified version of the Opdam--Cherednik Laplacian, that is
\begin{equation*}
	\widetilde{\Delta}^{\rm OC}_k = \Delta_k^{\rm OC} - |\rho|^2.
\end{equation*}
This modification is irrelevant for the heat kernel estimates (up to a time-depending term),  and thanks to it the associated heat kernel is probabilistic (see \cite[Corollary~5.1]{Scha}). 

The main operator we focus in this paper is the $W$-ivariant part of $\widetilde{\Delta}_k^{\rm OC}$, that is
\begin{equation*}
	L=\Delta + \sum_{\al\in\Sigma_+} k_\al \coth\frac{\al(x)}{2} \partial_\al.
\end{equation*}
The Opdam--Cherednik heat operator is given by
\begin{equation*}
	D=\partial_t - L.
\end{equation*}

The heat kernel $\hOC_t(x,y)$ is the fundamental solution of the Cauchy problem for $D$, see \cite[Section~5.]{Scha}, defined by
\begin{equation}\label{eq:hOC_def}
	\hOC_t(x,y)= e^{-|\rho|^2 t}\int_0^\infty e^{-\lambda^2 t} \varphi_{i\lambda}(x) \varphi_{i\lambda}(-y)\, \dd v'( \lambda),
\end{equation}
where $v'$ is the symmetric Plancherel measure, and $\{\varphi_\lambda\}_{\lambda\in \Eucl}$ denotes the spherical function. The latter is defined as the unique solution of
\begin{equation*}
	\left\{  \begin{array}{rcl}
		p(T) \varphi_\lambda &= & p(\lambda) \varphi_\lambda\\
		\varphi_\lambda(0)&=&1,
	\end{array}    \right.
\end{equation*}
for all $W$-invariant polynomials $p$. Here $p(T)$ for $p(\lambda)=\sum a_\gamma \lambda^{\gamma}$, $\gamma\in\NN^d$, denotes $\sum a_\lambda T_{e_1}^{\gamma_1}\circ\ldots\circ T_{e_d}^{\gamma_d}$. In particular, we have
\begin{equation}\label{eq:7}
	L\varphi_\lambda(x) = \big(|\lambda|^2 +|\rho|^2\big)\varphi_\lambda(x).
\end{equation}

Sharp bounds for the spherical function $\varphi_0$ are known (see \cite[Theorem~3.1]{Scha})
\begin{equation}\label{eq:varphi_0}
	\varphi_0(x)\simeq e^{-\rho(x)} \prod_{\al\in\Sigma_{++}} \big( 1+ \al(x)\big),
\end{equation}
where $\Sigma_{++}$ denotes the set of positive indivisible roots. 

Schapira \cite{Scha} proved that $\hOC_t$ is strictly positive. Moreover, he obtained sharp bounds for the heat kernel with one variable equal to $0$ (mind the change of time $t\mapsto t/2$):
\begin{equation*}
	\hOC_t(x,0) \simeq t^{-\frac{d}{2} -\gamma} e^{-\frac{|x|^2}{4t}} e^{-|\rho|^2 t} e^{-\rho(x)} \prod_{\al\in\Sigma_{++}} \big( 1+\al(x)\big) \big(1+t+\al(x)\big)^{k_\al +k_{2\al}-1},
\end{equation*}
where $\gamma=\sum_{\al\in\Sigma_+} k_\al $. Here and later we use the convention that $k_{2\al}=0$ if $2\al\notin\Sigma$.

\subsection{Rational Dunkl setting}

We briefly recall certain basics of the rational Dunkl setting in rank one. The Dunkl operator is given by
\begin{equation*}
	T_\xi f(x) = \partial_\xi f(x) +\sum_{\al\in\Sigma_+} k_\al \frac{\al(\xi) \big(f(x)-f(\sigma_\al(x)) \big)}{\al(x)}.
\end{equation*}
It is skew-adjoint in $L^2(\RR,\dd\mu^{\rm D})$, where
\begin{equation*}
	\dd\mu^{\rm D}(x) = \prod_{\al\in\sigma_+} |\al(x)|^{2k_\al}\,\dd x.
\end{equation*}
The Dunkl Laplacian is defined as
\begin{equation*}
	\Delta^{\rm D} f(x) = \Delta f(x) +\sum_{\al\in\Sigma_+} \frac{2k_\al}{\al(x)}\partial_\al f(x) -\sum_{\al\in\Sigma_+} \frac{k_\al |\al|^2 \big( f(x)-f(\sigma_\al(x))\big)}{\al(x)^2} 
\end{equation*}
The $W$-invariant Dunkl Laplacian is given by
\begin{equation*}
	L^{\rm D} =  \Delta f(x) +\sum_{\al\in\Sigma_+} \frac{2k_\al}{\al(x)}\partial_\al f(x),
\end{equation*}
and the associated heat operator by
\begin{equation*}
	D^{\rm D} = \partial_t -L^{\rm D}.
\end{equation*}

We denote by $\HDu_t(x,y)$ the heat kernel associated with $L^{\rm D}$. Notably, $\HDu_t$ is the fundamental solution of the heat equation. The sharp estimates for $\HDu$ are known in for the root systems $A_n$ in $\RR^n$ (see \cite{GraSaw23})
\begin{equation}\label{eq:rDunkl_est}
	\HDu_t(x,y)\simeq t^{-n/2} e^{-\frac{(x-y)^2}{4t}}  \prod_{\al\in\Sigma_+} \frac{1}{\big(t+\al(x)\al(y)\big)^k}.
\end{equation}
Recall that for $A_n$ the multiplicicty function is constant.

It is worth mentioning that also non $W$-invariant heat Dunkl kernels were studied, see for instance the paper by Anker and Trojan \cite{AnkTro}, where they consider the dihedral case.

We shall make use of the parabolic minimum principle for unbounded domains, cf. \cite{GraSaw25}, for both $D$ and $D^{\rm D}$.

\begin{lm}(Parabolic minimum principle)\label{lm:PMP}
	Let $T=D$ or $T=D^{\rm D}$. Let $0\le t_1<t_2<\infty$ and $\Omega$ be a connected opensubset of $\RR^d$. Assume that $f \in \mathcal{C}^{2,1}(\Omega\times (t_1,t_2))\cap \mathcal{C}(\overline{\Omega}\times[t_1,t_2])$ satisfies 
	\begin{enumerate}
		\item $Tf(x,t)\geq 0$ for  $(x,t)\in \Omega\times(t_1,t_2]$,
		\item $f(x,t)\geq 0$ for $(x,t)\in\partial\Omega\times [t_1,t_2]\cup \Omega\times \{t_1\}$.
	\end{enumerate}
	Then, $f(x,t)\geq 0$ for $(x,t)\in \overline{\Omega}\times [t_1,t_2]$. 
\end{lm}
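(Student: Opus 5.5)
The plan is the standard weak maximum principle argument, arguing by contradiction after an $\varepsilon$-perturbation that forces a strict inequality. Both $D=\partial_t-L$ and $D^{\rm D}=\partial_t-L^{\rm D}$ have the form $\partial_t-\Delta-b(x)\cdot\nabla$, with a first-order drift $b$ that is continuous on $\Omega$. In the Opdam--Cherednik case, $b$ is built from the terms $k_\al\coth\frac{\al(x)}{2}\,\al$; in the rational case, from $2k_\al\al/\al(x)$. There is no zeroth-order term, and this is the only structural feature the argument uses. We should assume that $\Omega$ stays away from the walls $\al(x)=0$ (e.g. $\Omega\subset C_+$), so that $b$ is smooth there. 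We also need $\Omega$ bounded, or else some control of $f$ at infinity, such as $\liminf_{|x|\to\infty}\inf_t f\ge0$. Without such control the statement fails even for the classical heat equation (Tychonoff's example), so I would add this hypothesis, as in the unbounded-domain versions in \cite{GraSaw25}.

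\textbf{Bounded $\Omega$.} Fix $\varepsilon>0$ and set $g(x,t)=f(x,t)+\varepsilon(t-t_1)+\varepsilon$. Then $Tg=Tf+\varepsilon\ge\varepsilon>0$ on $\Omega\times(t_1,t_2]$, and $g\ge\varepsilon>0$ on the parabolic boundary $\partial\Omega\times[t_1,t_2]\cup\Omega\times\{t_1\}$. Suppose $g$ takes a nonpositive value somewhere in $\overline\Omega\times[t_1,t_2]$. Let $t^*$ be the first time at which $\min_{\overline\Omega}g(\cdot,t)$ reaches $0$. This time exists by continuity on the compact set $\overline\Omega\times[t_1,t_2]$, and satisfies $t^*>t_1$.

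Take a point $x^*\in\overline\Omega$ with $g(x^*,t^*)=0$. Since $g>0$ on $\partial\Omega$, the point $x^*$ lies in $\Omega$. Because $g(x^*,t)>0$ for $t<t^*$, we get $\partial_tg(x^*,t^*)\le0$, where this is the one-sided derivative from the left, which suffices since $t^*\in(t_1,t_2]$. Since $x^*$ is a spatial minimum, $\nabla g(x^*,t^*)=0$ and $\Delta g(x^*,t^*)\ge0$. Hence $Tg(x^*,t^*)\le0$, which contradicts $Tg>0$. Therefore $g>0$ everywhere, and letting $\varepsilon\to0$ gives $f\ge0$.

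\textbf{Unbounded $\Omega$.} Apply the bounded case to $\Omega_R=\Omega\cap B(0,R)$, working with $f+\varepsilon$ in place of $f$. By the decay hypothesis, $f+\varepsilon>0$ on $\Omega\cap\partial B(0,R)$ once $R$ is large enough. The other boundary pieces are covered by the assumptions, and $T(f+\varepsilon)=Tf\ge0$ because $T$ has no zeroth-order term. Letting $R\to\infty$ and then $\varepsilon\to0$ gives $f\ge0$ on $\overline\Omega\times[t_1,t_2]$. If $\Omega_R$ is disconnected this does no harm, since the argument above never used connectedness.

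\textbf{Main obstacle.} The only real point is the behaviour at infinity and near the walls, where $b$ is singular. I would handle it by restricting to subdomains on which $b$ is continuous up to the boundary and on which the boundary data are known to be nonnegative. In the applications the relevant $f$ are differences of the heat kernel and explicit barriers, which decay at infinity, so the extra hypothesis holds there.
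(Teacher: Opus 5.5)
The paper gives no proof of this lemma. It only cites \cite{GraSaw25}, so there is no argument of the paper to compare yours with.

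Your proof is the standard weak maximum principle, and it is correct. Its only structural input is that $D=\partial_t-L$ and $D^{\rm D}=\partial_t-L^{\rm D}$ have no zeroth-order term. The drift only needs to be continuous inside $\Omega$, because you evaluate $Tg$ only at interior minima. So domains such as $(0,\infty)$ or $(y,\infty)$, with the wall on $\partial\Omega$ as in the paper, are fine. You are also right that connectedness is never used.

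One small point needs tidying: the case $t^*=t_2$. Membership in $\mathcal{C}^{2,1}(\Omega\times(t_1,t_2))$ does not by itself give $\nabla g=0$ and $\Delta g\ge0$ at time $t_2$. Run the argument on $[t_1,t_2-\delta]$ and let $\delta\to0$.

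Your caveat about unbounded $\Omega$ is a real issue. As literally stated, the lemma is false, and it fails in exactly the setting where the paper uses it. Take $D^{\rm D}=\partial_t-\partial_x^2-\frac{2k}{x}\partial_x$ on $\Omega=(0,\infty)$ and $g(t)=e^{-1/t^2}$. Set $u(x,t)=\sum_{m\ge0}g^{(m)}(t)x^{2m}/\gamma_m$ with $\gamma_m=\prod_{j=1}^m 2j(2j+2k-1)\ge(2m)!$.
\begin{itemize}
\item Tychonoff's estimates on $g^{(m)}$ show that $u$ solves $D^{\rm D}u=0$ and is continuous on $[0,\infty)\times[0,1]$.
\item It satisfies $u(\cdot,0)=0$ and $u(0,t)=g(t)\ge0$, so all hypotheses of the lemma hold.
\item Yet $u$ takes negative values. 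For $2k+1\in\NN$, its radial extension to $\RR^{2k+1}$ would otherwise contradict Widder's uniqueness theorem for nonnegative solutions of the heat equation.
\end{itemize}

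So some condition at infinity is necessary, and your $\liminf_{|x|\to\infty}\inf_t f\ge0$ suffices. When you check it in the applications, note the functions of the form $-\hOC_t(\cdot,y)+\text{barrier}$: the small-time function $u$ and the large-time upper-bound function. For these the condition requires an a priori decay of $\hOC_t(x,y)$ as $x\to\infty$, uniformly in $t$ on the relevant interval. That decay is not supplied by the estimates being proved. It does follow from \eqref{eq:hOC-Jacobi}, the support property of $W$, \eqref{eq:phi_prod} and \eqref{eq:heat(x,0)}.
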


\section{Rank 1 situation}\label{S:rank1}

From now on we focus on the general rank one root system $\Sigma=\{1,-1,2,-2\}$. Let $k_1,k_2$ be the multiplicities associated with the roots $\al_1=1$, $\al_2=2$, respectively. We assume that $k_1,k_2\geq 0$, $k_1+k_2>0$. The associated measure on $\RR$ is given by
\begin{equation*}
	\dd\mu(x) = \Big| 2\sinh \frac{x}{2}\Big|^{2k_1} \big| 2\sinh x\big|^{2k_2}  \,\dd x.
\end{equation*}
The $W$-invariant (modified) Opdam--Cherednik Laplacian is
\begin{equation}\label{eq:L_def}
	L= \partial^2_x + \Big(k_1\coth\frac{x}{2}+ 2k_2\coth(x)\Big)\partial_x.
\end{equation}
The associated heat operator $D=\partial_t-L$ satisfies for sufficiently smooth functions the identity
\begin{equation}\label{eq:D(product)}
	D(fg)(x,t) = Df(x,t) g(x,t)+f(x,t) Dg(x,t) +2\partial_x f(x,t)\partial_x g(x,t).
\end{equation}

The formula \eqref{eq:hOC_def} boils down to
\begin{equation}\label{eq:hOC_def_r1}
	\hOC_t(x,y) = e^{-|\rho|^2t} \int_0^\infty e^{-\lambda^2 t}\varphi_{i\lambda}(x) \varphi_{i\lambda}(y) \frac{\dd\lambda}{|c(\lambda)|^2},
\end{equation}
where $\rho=k_1/2+k_2$ and $c(\lambda)$ is the Harish-Chandra function given by
\begin{equation*}
	c(\lambda) = \frac{4^{k_1+k_2}\, \Gamma(k_1+k_2+1/2)\, \Gamma(2\lambda i)\, \Gamma(\lambda i +k_1/2)}{2\sqrt{\pi}\, \Gamma(2\lambda i +k_1)\, \Gamma(\lambda i +k_1/2 +k_2)}.
\end{equation*}
By the classical properties of the Gamma function we obtain for a certain $N\in\NN$ the bound
\begin{equation}\label{eq:c_bound}
	\frac{1}{|c(\lambda)|} \lesssim \lambda (1+\lambda)^N,\qquad \lambda>0.
\end{equation}

Schapira's bound for the $\hOC_t(x,0)$ reduces to
\begin{equation}\label{eq:heat(x,0)}
	\hOC_t(x,0)\simeq t^{-k_1-k_2-1/2} e^{-|\rho|^2 t} e^{-\rho x} e^{-\frac{x^2}{4t}} (1+x) (1+t+x)^{k_1+k_2-1},\quad x>0,\ t>0.
\end{equation}
Clearly, this estimate agrees with Theorem \ref{mainthm}.

By applying the bound \eqref{eq:varphi_0} for the spherical function $\varphi_0$, Theorem \ref{mainthm} can be equivalently formulated as
\begin{equation*}
	\hOC(x,y,t)\simeq t^{-1/2} e^{-\frac{(x-y)^2}{4t}} e^{-\rho^2 t} \varphi_0(x) \varphi_0(y) \frac{(t+1+x+y+xy)^{k_1+k_2-1}}{(t+xy)^{k_1+k_2}}.
\end{equation*}
 
We shall make use of \cite[Theorem~3.3]{Scha} (see also  \cite[Lemma~1]{GraSaw25} and \cite[pp.~9--10]{AnkOst}). Denote
\begin{equation*}
	G(x) = x\Big(\frac{\varphi'_0(x)}{\varphi_0(x)} +\frac{k_1}{2}\coth\frac{x}{2}+ k_2\coth(x)\Big),\qquad x\geq 0.
\end{equation*}

\begin{lm}\label{lm:G}
	For any $x\geq 0$ it holds that $G(x)>0$. Moreover, there exists $K>0$ such that for all
	\begin{equation*}
		\big|G(x)-1\big|\leq \frac{K }{x+1}, \qquad x\geq 0.
	\end{equation*}
\end{lm}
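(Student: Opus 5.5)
The plan is to reduce both claims to statements about the logarithmic derivative $\psi=\varphi_0'/\varphi_0$, using the radial ODE satisfied by $\varphi_0$ together with its Harish-Chandra expansion at infinity. Here $\varphi_0$ is the Jacobi function with spectral parameter $0$. With the sign convention in which it decays (consistent with \eqref{eq:varphi_0}), it satisfies
\begin{equation*}
\varphi_0''+A\varphi_0'=-\rho^2\varphi_0,\qquad A(x)=k_1\coth\tfrac{x}{2}+2k_2\coth x .
\end{equation*}
Writing $\delta(x)=|2\sinh\frac x2|^{2k_1}|2\sinh x|^{2k_2}$, we have $A=\delta'/\delta$ and $(\delta\varphi_0')'=-\rho^2\delta\varphi_0$. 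Since $\varphi_0>0$ and $\delta\varphi_0'(0)=0$, this gives $\varphi_0'<0$ for $x>0$, so $\psi<0$ there. In this notation
\begin{equation*}
G(x)=x\big(\psi(x)+\tfrac12A(x)\big).
\end{equation*}

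\emph{Positivity.} At $x=0$ the function $\varphi_0$ is smooth and even, so $\psi(x)=O(x)$, while $\frac{x}{2}A(x)\to k_1+k_2$. Hence $G(0)=k_1+k_2>0$ (with $G(0)$ read as this limit). For $x>0$ I first tried a barrier argument for $u=\psi+\frac A2$ through its Riccati equation. It fails: for $k_1=0$ the forcing term $\frac{A^2}{4}+\frac{A'}{2}-\rho^2$ equals $(k_2^2-k_2)/\sinh^2x$, which is negative when $k_2<1$.

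Instead I will use the trigonometric Dunkl structure. Opdam's non-symmetric eigenfunction at spectral parameter $0$ is, in rank one, a combination $G_0=\varphi_0+c\,\varphi_0'$. After fixing normalisations via $T_\xi$, it should reduce to $\varphi_0\mp\varphi_0'/\rho$, and Schapira's theorem states that it is strictly positive. Evaluated at $-x$, this yields $\varphi_0(x)+\varphi_0'(x)/\rho>0$, that is, $\psi>-\rho$. Since $\frac12A(x)=\frac{k_1}{2}\coth\frac x2+k_2\coth x>\frac{k_1}{2}+k_2=\rho$ for $x>0$, we obtain $\psi+\frac A2>0$, hence $G(x)>0$ for $x>0$. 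Pinning down the exact constant $c$ and the sign in $G_0$ is bookkeeping, but it is the crucial point: what is needed is exactly the bound $\psi>-\rho$.

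\emph{Asymptotics.} For large $x$ I will use the Harish-Chandra expansion of $\varphi_0$ at the degenerate parameter $\lambda=0$, which involves a logarithmic term:
\begin{equation*}
\varphi_0(x)=e^{-\rho x}\big(ax+b+O(xe^{-x})\big),\qquad a>0,
\end{equation*}
and the same expansion may be differentiated termwise. Then
\begin{equation*}
\psi(x)=-\rho+\frac{a}{ax+b}+O(xe^{-x}),
\end{equation*}
while $\frac12A(x)=\rho+O(e^{-x})$. Combining these,
\begin{equation*}
G(x)=\frac{ax}{ax+b}+O(x^2e^{-x})=1-\frac{b}{ax+b}+O(x^2e^{-x}),
\end{equation*}
so $|G(x)-1|\lesssim 1/(1+x)$ for $x\ge x_0$. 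On $[0,x_0]$ the function $G$ is continuous, hence bounded, so enlarging $K$ covers this range as well.

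The main obstacle is justifying the expansion of $\varphi_0$ with an exponentially small remainder, including its derivative. I would obtain it from the $\lambda\to0$ limit of $\varphi_\lambda=c(\lambda)\Phi_\lambda+c(-\lambda)\Phi_{-\lambda}$, where $\Phi_\lambda$ is the Harish-Chandra series. The simple pole of $c$ at $0$ produces $a$ through $\partial_\lambda\Phi_\lambda\big|_{\lambda=0}$. Alternatively, one can simply invoke \cite[Theorem~3.3]{Scha} and \cite[pp.~9--10]{AnkOst}, where this asymptotic is recorded.
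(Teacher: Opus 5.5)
Your proof is correct, but it takes a different route from the paper's. The paper never analyses $\varphi_0$ directly. It introduces $\widetilde G(x)=x\big(\varphi_0'/\varphi_0+\rho\big)$ and quotes \cite[Theorem~3.3]{Scha} for both $\widetilde G\ge 0$ and $|\widetilde G-1|\lesssim(1+x)^{-1}$. It then passes to $G$ through $0<G-\widetilde G=x\big(\frac{k_1}{2}(\coth\frac{x}{2}-1)+k_2(\coth x-1)\big)\lesssim(1+x)^{-1}$.

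Your positivity step has the same skeleton: $\psi>-\rho$ combined with $\frac12A>\rho$. You obtain $\psi>-\rho$ from Schapira's positivity of the non-symmetric function $G_\lambda$ for real $\lambda$. The bookkeeping you left open does work out. With the paper's $T=T_1$, one gets $Tf=f'-\rho f$ for even $f$ and $Tg=g'+(A+\rho)g$ for odd $g$. Using $L\varphi_\lambda=(\lambda^2-\rho^2)\varphi_\lambda$ (your sign), this gives $G_\lambda=\varphi_\lambda+\varphi_\lambda'/(\lambda-\rho)$. Hence $G_0=\varphi_0-\varphi_0'/\rho$ and $G_0(-x)=\varphi_0(x)+\varphi_0'(x)/\rho$, exactly as you guessed.

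For the decay you replace the citation by the Harish-Chandra expansion at the degenerate point $\lambda=0$. This is sound, for three reasons. The coefficients of the Harish-Chandra series have no poles near $\lambda=0$. The function $c$ has a simple pole at $0$ for all admissible $k_1,k_2$, including $k_1=0$ or $k_2=0$, where the cancelling Gamma factors still leave one pole. Finally, $a\neq0$ follows from \eqref{eq:varphi_0}. The expansion even gives the explicit form $G(x)=1-\frac{b}{ax+b}+O(xe^{-x})$. The price is the uniform convergence and termwise differentiation of the series near $\lambda=0$, which you rightly flag.

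In short, the paper's argument is a two-line reduction to a general-rank black box. Yours is essentially self-contained in rank one and shows where the inequality $\psi>-\rho$ comes from (positivity of $G_0$ at negative arguments). It also shows where the $1/x$ rate comes from (the pole of $c$ at $0$).
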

\begin{proof}
	Denote
	\begin{equation*}
		\widetilde{G}(x) = x\Big( \frac{\varphi'_0(x)}{\varphi_0(x)} +\frac{k_1}{2} + k_2\Big)
	\end{equation*}
	Directly from \cite[Theorem~3.3~2.]{Scha} we have 
	\begin{equation*}
		\widetilde{G}(x)\geq 0\qquad \text{and}\qquad \big| \widetilde{G}(x)-1\big|\lesssim \frac{1}{1+x}.
	\end{equation*}
	It suffices to use the facts $G(x)>\widetilde{G}(x)$ and that $x\mapsto x(x+1)(\coth x-1)$ is a bounded function. Mind that $G(0)=k_1+k_2>0$.
\end{proof}

\begin{rem}
	Notice that Lemma \ref{lm:G} implies that
	\begin{equation*}
		\inf_{x>0} \Big(2G(x)+\frac{1}{1+x^2}\Big)>0.
	\end{equation*}
	We define
	\begin{equation}\label{eq:CG}
		\CG=\frac{K}{\inf_{x>0} (2G(x)+\frac{1}{1+x^2})},
	\end{equation}
	where $K$ is as in Lemma \ref{lm:G}. 
\end{rem}

\subsection{Non-compact Jacobi setting}\label{S:Jacobi}
Let $\al\geq\beta\geq-1/2$, $\al>-1/2$. The Jacobi Laplacian $\Delta^{\al,\beta}$ is given by
\begin{equation*}
	\Delta^{\al,\be}  = -\partial_x^2 - \Big((2\al+1)\coth x + (2\be+1)\tanh x\Big)\partial_x.  
\end{equation*}
The Jacobi function $\varphi_\lambda^{\al,\be}$ is the unique smooth even solution of the equation
\begin{equation*}
	\Delta^{\al,\be} u(x) = \big( |\lambda|^2 + (\al+\be+1)^2\big) u(x),\qquad u(0)=1.
\end{equation*}
Moreover, for every $x\in\RR$ the function $\lambda\mapsto \varphi_\lambda^{\al,\be}(x)$ is analytic and even. The Jacobi functions form an orthogonal system on $((0,\infty),\dd\mu_{\al,\be})$, where
\begin{equation*}
	\dd \mu_{\al,\be}(x) = \big(2\sinh x\big)^{2\al+1} \big(2\cosh x\big)^{2\be+1} \dd x. 
\end{equation*}

The associated heat kernel $\mathcal{H}_t^{\al,\be}$ is the fundamental solution of the system
\begin{equation*}
	\left\{ \begin{array}{rl}
		\partial_t u(x,t)&= -\Delta^{\al,\be} u(x,t),\\
		\lim\limits_{t\to 0} u(x,t)&=f(x).
	\end{array}\right.
\end{equation*}
Sharp estimates for $\mathcal{H}_t^{\al,\be}(x,0)$ analogous to \eqref{eq:heat(x,0)} were shown in \cite{KawLiu}. For the full discussion of the Jacobi setting we refer to Koornwinder \cite{Koornwinder_book} (see also \cite{AstBla}).

The Opdam--Cherednik setup in rank one is closely related with the non-compact Jacobi setting. In particular, the operator $L$ is equivalent to the Jacobi operator for the type parameters $\alpha=k_1+k_2-1/2$ and $\beta=k_2-1/2$. More precisely, 
\begin{equation*}
	MLM^{-1}f(x) = -\frac{1}{4} \Delta^{k_1+k_2,-1/2,k_2-1/2} f(x),
\end{equation*}
where $Mf(x)=f(2x)$. 

Consequently, the spherical functions $\varphi_\lambda$ correspond to the Jacobi functions via (see \cite[Remark~4.6]{Ank_lec})
\begin{equation*}
	\varphi_\lambda(2x) = \varphi_{2i\lambda}^{k_1+k_2-1/2,k_2-1/2}(x),\qquad x>0,\ \lambda\in\CC.
\end{equation*}
Moreover, the following relation between the heat kernels is valid
\begin{equation}\label{eq:OC-Jacobi}
	\mathcal{H}_t^{k_1+k_2-1/2,k_2-1/2}(x,y) = 4 \hOC_{4t}(2x,2y).
\end{equation}
Therefore, Theorem \ref{thm:Jacobi} is equivalent to Theorem \ref{mainthm}.

The following formula is valid
\begin{equation}\label{eq:hOC-Jacobi}
	\hOC_t(x,y) = 2\int_0^\infty \hOC_t(z,0) W\Big(\frac{x}{2},\frac{y}{2},\frac{z}{2}\Big) \dd \mu(z), 
\end{equation}
where $W(x,y,z)$ is non-negative, even in $z$, supported in $|z|\in[|x-y|,x+y]$, and satisfying
\begin{equation*}
	2\int_0^\infty W\Big(x,y,\frac{z}{2}\Big)  \dd\mu(z)=1, \qquad x,y>0,
\end{equation*} 
see \cite[pp.~58--59]{Koornwinder_book}. Moreover, we shall make use of the product formula for the spherical functions:
\begin{equation}\label{eq:phi_prod}
	\varphi_0(x)\varphi_0(y)=\int_0^\infty \varphi_0(z) W\Big(\frac{x}{2},\frac{y}{2},\frac{z}{2}\Big) \dd \mu(z).
\end{equation}

\subsection{Auxiliary notation}\label{section:aux_not}

For a function $\psi(x,y,t)$ denote
\begin{equation*}
	H^\psi_t(x,y)= t^{-1/2} e^{-\frac{(x-y)^2}{4t}} e^{-\rho^2 t} \varphi_0(x) \varphi_0(y) \psi(x,y,t).
\end{equation*}
Theorem \ref{mainthm} states that $\hOC_t (x,y)\simeq H^{\psi_0}_t(x,y)$, where 
\begin{equation}\label{eq:psi_0}
	\psi_0(x,y,t)=\frac{(t+1+x+y+xy)^{k_1+k_2-1}}{(t+xy)^{k_1+k_2}}.
\end{equation}
We shall simply write $H^0_t:=H^{\psi_0}_t$.

We use the following notation. For a function $f$ defined on $\RR^2\times(0,\infty)$ we abbreviate $D\big(f(\cdot,y,\cdot)\big)(x,t)$ to  $Df(x,y,t)$. The operator $D$ always acts on the variables $x$ and $t$. Moreover, it will be convenient to use the symbol $\eth$ to denote the following operation
\begin{equation*}
	\eth_x^n f(x,y,r)=\frac{\partial_x^n f(x,y,t)}{f(x,y,t)},\qquad n\in\NN,
\end{equation*}
and similarly for $\eth_t$.

We have the following lemma.

\begin{lm}\label{lm:gen_form}
	Let $\psi\in C^2((0,\infty)^3)$. The following formula is valid
	\begin{multline*}
		\frac{D H^\psi_t(x,y)}{H^\psi_t(x,y)}\\
		= \big( G(x)-1\big) \Big( \frac{x-y}{xt} -\frac{2 \eth_x \psi}{x}\Big) +\eth_t\psi +\frac{x-y}{xt} +\Big(\frac{x-y}{t} -\frac{2}{x}\Big)\eth_x \psi -\eth^2_x\psi.
	\end{multline*}
\end{lm}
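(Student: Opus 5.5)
The plan is a direct computation. I would split $H^\psi_t$ into three factors and use the product rule \eqref{eq:D(product)}. Since $D$ acts only in $(x,t)$, the factor $\varphi_0(y)$ is a constant and cancels in $DH^\psi_t/H^\psi_t$. Write
\begin{equation*}
E(x,t)=t^{-1/2}e^{-\frac{(x-y)^2}{4t}}e^{-\rho^2t},\qquad a(x)=k_1\coth\frac{x}{2}+2k_2\coth x ,
\end{equation*}
so that $L=\partial_x^2+a\,\partial_x$. Iterating \eqref{eq:D(product)} for $H=E\,\varphi_0\,\psi$ gives
\begin{equation*}
\frac{DH}{H}=\frac{DE}{E}+\frac{D\varphi_0}{\varphi_0}+\frac{D\psi}{\psi}+2\eth_xE\,\eth_x\varphi_0+2\eth_xE\,\eth_x\psi+2\eth_x\varphi_0\,\eth_x\psi .
\end{equation*}

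The individual terms are elementary. We have $\eth_xE=-\frac{x-y}{2t}$, $\eth_x^2E=\frac{(x-y)^2}{4t^2}-\frac1{2t}$ and $\eth_tE=-\frac1{2t}+\frac{(x-y)^2}{4t^2}-\rho^2$. Hence
\begin{equation*}
\frac{DE}{E}=\eth_tE-\eth_x^2E-a\,\eth_xE=-\rho^2+\frac{a(x-y)}{2t}.
\end{equation*}
Since $\varphi_0$ does not depend on $t$, the eigen-equation \eqref{eq:7} with $\lambda=0$ gives $D\varphi_0/\varphi_0=-L\varphi_0/\varphi_0$. This is a constant of size $\rho^2$, and it is exactly the constant that makes $e^{-\rho^2t}\varphi_0$ a stationary-type solution. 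So it cancels the $-\rho^2$ coming from $E$, provided the sign convention of \eqref{eq:7} is taken so that $e^{-\rho^2 t}\varphi_0$ is a solution of the heat equation. Finally $D\psi/\psi=\eth_t\psi-\eth_x^2\psi-a\,\eth_x\psi$.

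Collecting terms, every occurrence of $a$ and of $\varphi_0'/\varphi_0$ appears in one of two groups. The first is the combination multiplying $\frac{x-y}{t}$. The second is the combination multiplying $-2\eth_x\psi$, namely $\frac a2$ together with the cross term $2\eth_x\varphi_0\,\eth_x\psi$. The key step is to recognise both groups as $G(x)/x$ via the definition of $G$, i.e.\ $\frac{G(x)}{x}=\frac{\varphi_0'}{\varphi_0}+\frac a2$. This yields
\begin{equation*}
\frac{DH}{H}=\frac{G(x)}{x}\,\frac{x-y}{t}-\frac{2G(x)}{x}\,\eth_x\psi+\frac{x-y}{t}\,\eth_x\psi+\eth_t\psi-\eth_x^2\psi .
\end{equation*}
Writing $G=(G-1)+1$ in the first two terms then produces exactly the stated formula: the $(G-1)$ factor, the term $\frac{x-y}{xt}$, and $\big(\frac{x-y}{t}-\frac2x\big)\eth_x\psi$.

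The only delicate point, which I expect to be the main obstacle, is the sign bookkeeping. The $\varphi_0'/\varphi_0$ contributions from $-a\,\eth_xE$, from $2\eth_xE\,\eth_x\varphi_0$ and from $2\eth_x\varphi_0\,\eth_x\psi$ must combine with $a/2$ with the same relative sign as in the definition of $G$. The constant from $L\varphi_0$ must also exactly cancel $\rho^2$. I would check both against the sign conventions used for $L$ and \eqref{eq:7}, verifying the cancellation on the explicit case $\psi\equiv1$ before writing out the general computation. Everything else is routine differentiation, valid for $\psi\in C^2$ and $x>0$, where $\coth$ is smooth.
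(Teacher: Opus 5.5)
Your outline is the paper's: the paper first bundles your $E$ and $\varphi_0$ into $h^0_t$, computes $Dh^0_t/h^0_t=(x-y)G(x)/(xt)$, and then multiplies by $\psi$. The formula you reach before splitting $G=(G-1)+1$ is also correct. However, the expansion you display does not lead to that formula. Since the lemma consists of nothing but this bookkeeping, the sign check you defer is the whole proof.

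\textbf{The cross terms.} Because $D=\partial_t-L$ and $L(fg)=(Lf)g+f(Lg)+2\partial_xf\,\partial_xg$, the correct rule is $D(fg)=(Df)g+f(Dg)-2\partial_xf\,\partial_xg$. Every cross term in your expansion must therefore carry $-2$, not $+2$. The plus sign in \eqref{eq:D(product)} is a misprint; the paper's own computation uses the minus sign, cf.\ its term $2\eth_x\psi\big(\frac{x-y}{2t}-\frac{\varphi_0'}{\varphi_0}\big)$.

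With your $+2$, the following happens:
\begin{itemize}
\item the coefficient of $\frac{x-y}{t}$ is $\frac a2-\frac{\varphi_0'}{\varphi_0}$;
\item the $a$- and $\varphi_0$-part of the coefficient of $\eth_x\psi$ is $-2\big(\frac a2-\frac{\varphi_0'}{\varphi_0}\big)$;
\item $\frac{x-y}{t}\eth_x\psi$ enters with a minus sign.
\end{itemize}
Since $\frac a2\to\rho$ and $\frac{\varphi_0'}{\varphi_0}\to-\rho$, the combination $\frac a2-\frac{\varphi_0'}{\varphi_0}$ is about $2\rho$, not $G(x)/x\approx 1/x$. So the recognition you call the key step fails as written.

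With $-2$ it works:
\begin{itemize}
\item $-2\eth_xE\,\eth_x\varphi_0=\frac{x-y}{t}\frac{\varphi_0'}{\varphi_0}$ combines with $\frac{a(x-y)}{2t}$ into $\frac{(x-y)G(x)}{xt}$;
\item $-2\eth_xE\,\eth_x\psi=\frac{x-y}{t}\eth_x\psi$;
\item $-2\eth_x\varphi_0\,\eth_x\psi-a\,\eth_x\psi=-\frac{2G(x)}{x}\eth_x\psi$.
\end{itemize}

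\textbf{The $\rho^2$ cancellation.} This is not a convention you may choose; it requires the fact $L\varphi_0=-\rho^2\varphi_0$, so that $D\varphi_0/\varphi_0=+\rho^2$. Read literally, \eqref{eq:7} at $\lambda=0$ gives $L\varphi_0=\rho^2\varphi_0$, which would leave $-2\rho^2$. That display also has a sign misprint, and the paper's computation in fact uses $+\rho^2$.

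You can justify the correct sign through the Jacobi correspondence of Section~\ref{S:Jacobi}. Take $\alpha=k_1+k_2-\frac12$ and $\beta=k_2-\frac12$. Then $M\varphi_0=\varphi_0^{\alpha,\beta}$ and $MLM^{-1}=-\frac14\Delta^{\alpha,\beta}$. Also $\Delta^{\alpha,\beta}\varphi_0^{\alpha,\beta}=(\alpha+\beta+1)^2\varphi_0^{\alpha,\beta}$ with $\alpha+\beta+1=2\rho$. Together these give $M(L\varphi_0)=-\rho^2M\varphi_0$, that is, $L\varphi_0=-\rho^2\varphi_0$. This is consistent with $L(xe^{-\rho x})\approx-\rho^2xe^{-\rho x}$ at infinity, where the drift coefficient $a$ tends to $2\rho$.

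With these two corrections your computation goes through and coincides with the paper's proof.
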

\begin{proof}
	Notice that \eqref{eq:7} gives $L \varphi_0(x)= \rho^2\varphi_0$. We use the one-off notation
	\begin{equation*}
		h^0_t(x,y)= t^{-1/2} e^{-\frac{(x-y)^2}{4t}} e^{-\rho^2 t} \varphi_0(x) \varphi_0(y).
	\end{equation*}
	By \eqref{eq:D(product)} we have
	\begin{align*}
		\frac{D h^0_t(x,y)}{h^0_t(x,y)}&=-\frac{1}{2t} +\frac{(x-y)^2}{4 t^2} -\rho^2 +\frac{x-y}{t}\Big( \frac{k_1}{2}\coth\frac{x}{2}+k_2\coth x\Big)  -\frac{(x-y)^2}{4t^2} \\
		&\qquad+\frac{1}{2t}+\rho^2 +\frac{x-y}{t} \cdot\frac{\varphi'_0(x)}{\varphi_0(x)}\\
		&=\frac{(x-y)G(x)}{xt}.
	\end{align*}
	Now we again apply \eqref{eq:D(product)} and obtain
	\begin{align*}
		\frac{D H^\psi_t(x,y)}{H^\psi_t(x,y)}
		& = \frac{(x-y)G(x)}{xt} +\eth_t \psi -\Big(k_1\coth\frac{x}{2}+ 2k_2\coth x\Big)\eth_x \psi - \eth_x^2\psi \\
		&\qquad	+2\eth_x\psi \Big(\frac{x-y}{2t}-\frac{\varphi'_0(x)}{\varphi_0(x)}\Big)\\
		&=G(x)\Big(\frac{x-y}{xt}-\frac{2\eth_x\psi}{x}\Big)+\eth_t\psi +\frac{(x-y)\eth_x \psi}{t} -\eth^2_x\psi.
	\end{align*}
	A simple rearrangement of the terms concludes the proof. 
\end{proof}

\section{Proof of Theorem \ref{mainthm}}\label{Section:proof}

\subsection{Small time}

In this section we prove Theorem \ref{mainthm} for $t\leq 1$. We denote
\begin{equation*}
	\delta(x) = \Big(2\sinh\frac{x}{2}\Big)^{2k_1} \Big( 2 \sinh x\Big)^{2k_2}, 
\end{equation*}
which is the density of the measure $\dd\mu$. Moreover, we also define its counterpart corresponding to the rational Dunkl setting of rank one and the multiplicity $k=k_1+k_2$:
\begin{equation*}
	\delta_{\rm D} (x) = (2 x)^{2k_1+2k_2}.
\end{equation*}

\begin{lm}\label{lm:L,L_Dunkl}
	If $f\in \mathcal{C}^2((0,\infty))$, then
	\begin{equation*}
		\frac{L\Big( \big(\frac{\delta_{\rm D}(x)}{\delta(x)}\big)^{1/2} f(x)\Big) }{ \big(\frac{\delta_{\rm D}(x)}{\delta(x)}\big)^{1/2}  f(x)} = \frac{L^{\rm D}f(x)}{f(x)} -\rho^2+R_{k_1,k_2}(x),
	\end{equation*}
	where $L^D$ is the Dunkl Laplacian corresponding to the multiplicity $k=k_1+k_2$, and
	\begin{equation*}
		\big|R_{k_1,k_2}(x)\big| \lesssim 1,\qquad x>0.
	\end{equation*}
\end{lm}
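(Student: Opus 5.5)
We prove the identity by direct computation: this is a ground-state (Liouville-type) conjugation. Write $L=\partial_x^2+a(x)\partial_x$ with $a=\delta'/(2\delta)=k_1\coth\frac{x}{2}+2k_2\coth x$. Similarly $L^{\rm D}=\partial_x^2+b(x)\partial_x$ with $b=\delta_{\rm D}'/(2\delta_{\rm D})=\frac{2(k_1+k_2)}{x}$. Set $m=(\delta_{\rm D}/\delta)^{1/2}$, so that $\frac{m'}{m}=\frac12\big(\frac{\delta_{\rm D}'}{\delta_{\rm D}}-\frac{\delta'}{\delta}\big)=b-a$.

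By the product rule,
\begin{equation*}
	\frac{L(mf)}{mf}=\frac{f''}{f}+\Big(a+2\frac{m'}{m}\Big)\frac{f'}{f}+\frac{m''}{m}+a\frac{m'}{m}.
\end{equation*}
Since $a+2(b-a)=2b-a$, the first-order coefficient is not $b$. So we cannot conjugate $L$ to $L^{\rm D}$ with this factor alone. Taken literally, the identity would require $m'/m=(b-a)/2$, that is, the exponent $1/2$ applied to the ratio of the square-root weights. I would therefore first check the convention: with $m=(\delta_{\rm D}/\delta)^{1/2}$ and $\delta$ the density, the correct conjugation is
\begin{equation*}
	m^{-1}\circ \big(\text{self-adjoint form}\big)\circ m .
\end{equation*}
Equivalently, one should compare $\delta^{1/2}L\delta^{-1/2}$ with $\delta_{\rm D}^{1/2}L^{\rm D}\delta_{\rm D}^{-1/2}$. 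Both are Schrödinger operators $\partial_x^2-V$ and $\partial_x^2-V_{\rm D}$, with
\begin{equation*}
	V=\tfrac14a^2+\tfrac12a',\qquad V_{\rm D}=\tfrac14b^2+\tfrac12b'.
\end{equation*}
This yields
\begin{equation*}
	\frac{L(\mu f)}{\mu f}=\frac{L^{\rm D}f}{f}-(V-V_{\rm D}),
\end{equation*}
where $\mu$ is the appropriate square-root ratio of the densities, with the exponent and orientation chosen so that the first-order terms cancel. I would fix the exponent by requiring the $f'/f$ coefficient to equal $b$, and interpret the statement's factor accordingly, up to the intended normalization.

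The main step is then to show that $V-V_{\rm D}+\rho^2$ is bounded on $(0,\infty)$. Near $0$, expand $\coth\frac{x}{2}=\frac{2}{x}+\frac{x}{6}+O(x^3)$ and $\coth x=\frac1x+\frac{x}{3}+O(x^3)$. This gives $a=\frac{2(k_1+k_2)}{x}+O(x)$, so $a=b+O(x)$ and $a'=b'+O(1)$. The $x^{-2}$ singularities cancel exactly, and the cross term $\frac{b}{2}\,O(x)$ is $O(1)$. Hence $V-V_{\rm D}=O(1)$ near $0$.

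For $x\ge1$, $a\to k_1+2k_2=2\rho$ exponentially fast, and $a'\to0$. Thus $V\to\rho^2$, while $V_{\rm D}=O(x^{-2})$. So $V-V_{\rm D}-\rho^2$ is bounded, in fact it tends to $0$. Continuity on compact subintervals then gives $R_{k_1,k_2}=\rho^2-(V-V_{\rm D})$ bounded uniformly on $(0,\infty)$.

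The only real obstacle is the bookkeeping at $x\to0$. We need the exact cancellation of the $x^{-2}$ terms, and this uses precisely that the rational multiplicity is $k=k_1+k_2$ and that $\delta_{\rm D}=(2x)^{2k}$ matches the leading behaviour $\delta(x)\sim x^{2k_1}(2x)^{2k_2}$. The constant factor $2^{2k_1}$ is irrelevant, because only logarithmic derivatives enter.
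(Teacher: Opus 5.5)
Your plan is the same as the paper's: expand $L(mf)/(mf)$ by the product rule, then show the zeroth-order remainder is bounded by expanding near $0$ and near $\infty$. Your asymptotic analysis of $\rho^2-(V-V_{\rm D})$ is correct. The problem is the first identification, which leads you to assert, wrongly, that the lemma fails as stated.

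The drift of $L$ is $a=\delta'/\delta$, not $\delta'/(2\delta)$. Indeed $\frac{\dd}{\dd x}\log\big(2\sinh\frac{x}{2}\big)^{2k_1}=k_1\coth\frac{x}{2}$ and $\frac{\dd}{\dd x}\log(2\sinh x)^{2k_2}=2k_2\coth x$. This is just the fact that $L=\delta^{-1}\partial_x(\delta\,\partial_x)$ is symmetric with respect to $\dd\mu=\delta\,\dd x$. Likewise $b=\delta_{\rm D}'/\delta_{\rm D}=2k/x$. So for $m=(\delta_{\rm D}/\delta)^{1/2}$ you get $m'/m=\frac12(b-a)$ and $a+2m'/m=b$: the first-order terms cancel with exactly the factor in the statement.

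Your sentence ``we cannot conjugate $L$ to $L^{\rm D}$ with this factor alone'' is therefore false, and your proposed repair makes things worse. Your recipe is to require $m'/m=(b-a)/2$ with your value $a=\delta'/(2\delta)$; this yields $m=(\delta_{\rm D}/\delta)^{1/4}$, the ``exponent $1/2$ applied to the ratio of the square-root weights.'' For that factor the first-order coefficient is $(a+b)/2\neq b$. The proposal is also internally inconsistent: the Schr\"odinger reduction $\delta^{1/2}L\delta^{-1/2}=\partial_x^2-V$ with $V=\frac14a^2+\frac12a'$, which you call ``equivalent,'' holds precisely because $a=\delta'/\delta$. It produces $\mu=\delta^{-1/2}\delta_{\rm D}^{1/2}$, the stated factor, not the exponent-$1/4$ one.

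Once $a=\delta'/\delta$ is used, everything else you wrote goes through with $\mu=(\delta_{\rm D}/\delta)^{1/2}$ exactly as stated. The zeroth-order term is $m''/m+a\,m'/m=\frac14(b^2-a^2)+\frac12(b'-a')=V_{\rm D}-V$, so $R_{k_1,k_2}=\rho^2-(V-V_{\rm D})$. This is the paper's explicit expression, a combination of $\frac{4}{x^2}-\sinh^{-2}\frac{x}{2}$, $\frac{1}{x^2}-\sinh^{-2}x$ and $1+\frac{2}{x^2}-\coth\frac{x}{2}\coth x$. Your expansions at $0$ and $\infty$ then show it is bounded; for instance the last bracket tends to $\frac16$ at $0$ and to $0$ at $\infty$.

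So the fix is a one-line correction, but as submitted the proposal does not prove the stated lemma; it rejects it and replaces the factor with an unspecified, and by your own recipe incorrect, one.
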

\begin{proof}
Denote $g(x)=\big(\frac{\delta_{\rm D}(x)}{\delta(x)}\big)^{1/2}$. By the definition \eqref{eq:L_def} of $L$ we have
\begin{equation*}
	\frac{L(g f)}{gf} = \eth^2f  + \eth f\big( 2\eth g' +k_1\coth\frac{x}{2} +2k_2\coth(x) \big) + \eth^2g+k_1 \eth g \coth\frac{x}{2}+2k_2\eth g\coth x.  
\end{equation*}

Observe that
\begin{equation*}
	\eth_x g(x)=\frac{k_1}{2}\Big(\frac{2}{x}-\coth\frac{x}{2}\Big) + k_2\Big(\frac{1}{x}-\coth x\Big).
\end{equation*}
Moreover,
	\begin{multline*}
		\eth_x^2 g(x) = \Big[\frac{k_1}{2}\Big(\frac{2}{x}-\coth\frac{x}{2}\Big) + k_2\Big(\frac{1}{x}-\coth x\Big)\Big]^2\\
		 +\Big[\frac{k_1}{2}\Big(-\frac{2}{x}+\frac{1}{2\sinh^2\frac{x}{2}}\Big) + k_2\Big(-\frac{1}{x^2}+\frac{1}{\sinh^2 x}\Big)\Big].
	\end{multline*}
		
	We directly compute
	\begin{equation*}
		2\eth g' +k_1\coth\frac{x}{2} +2k_2\coth(x) = \frac{2(k_1+k_2)}{x}
	\end{equation*}
	and
	\begin{multline*}
		\eth^2g+k_1 \eth g \coth\frac{x}{2}+2k_2\eth g\coth x = -\rho^2+\frac{k_1^2-k_1}{4}\Big[ \Big(\frac{2}{x}\Big)^2 -\frac{1}{\sinh^2\frac{x}{2}}\Big]\\
		 + (k_2^2-k_2)\Big[\frac{1}{x^2} - \frac{1}{\sinh^2 x}\Big] +k_1k_2\Big[ 1+\frac{2}{x^2} -\coth\frac{x}{2} \coth x\Big]=: -\rho^2+R_{k_1,k_2}(x). 
	\end{multline*}
	
	By combining the above we arrive at
	\begin{equation*}
		\frac{L(fg)}{fg} = \frac{f'' + \frac{2(k_1+k_2)}{x} f'}{f} -\rho^2+R_{k_1,k_2}(x).
	\end{equation*}
	It is straightforward to verify that $R_{k_1,k_2}(x)$ is bounded.
	
\end{proof}

\begin{prop}\label{prop:small_t}
	It holds that
	\begin{equation*}
		\hOC_t(x,y)\simeq H^0_t(x,y),
	\end{equation*}
	uniformly in $x,y>0$ and $t\in(0,1]$.
\end{prop}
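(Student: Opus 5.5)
The plan is to compare $\hOC_t$ with the rational $W$-invariant Dunkl heat kernel of multiplicity $k=k_1+k_2$, conjugated by $g=(\delta_{\rm D}/\delta)^{1/2}$, using Lemma \ref{lm:L,L_Dunkl} and the parabolic minimum principle (Lemma \ref{lm:PMP}). For $t\le 1$ the target simplifies. Since $(x-y)^2/(4t)$ dominates, the factor $e^{-\rho^2 t}$ is harmless. The factor $(t+1+x+y+xy)^{k-1}$ is comparable to $(1+x)^{k-1}(1+y)^{k-1}$. Together with $\varphi_0(x)\simeq e^{-\rho x}(1+x)$ and the fact that $g(x)\simeq (1+x)^{k}e^{-\rho x}$ (indeed $\delta(x)\simeq (x/(1+x))^{2k}e^{2\rho x}$), one checks that
\begin{equation*}
H^0_t(x,y)\simeq g(x)g(y)\, t^{-1/2}e^{-\frac{(x-y)^2}{4t}}(t+xy)^{-k}\simeq g(x)g(y)\HDu_t(x,y),\qquad t\le 1,
\end{equation*}
by \eqref{eq:rDunkl_est} with $n=1$. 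So it suffices to show $\hOC_t(x,y)\simeq g(x)g(y)\HDu_t(x,y)$ for $t\le 1$. One should double-check the factor $(1+x)(1+y)$ against $(1+x)^k(1+y)^k$ and $(t+xy)^{-k}$ in the regime $xy\gg 1$, possibly up to bounded factors $e^{O(t)}$.

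Next, set $u(x,t)=e^{\pm Ct}e^{-\rho^2 t}g(x)g(y)\HDu_t(x,y)$ with $y$ fixed. By Lemma \ref{lm:L,L_Dunkl} and $D^{\rm D}\HDu=0$, we get
\begin{equation*}
\frac{Du}{u}=\pm C - R_{k_1,k_2}(x).
\end{equation*}
Choosing $C\ge \sup|R_{k_1,k_2}|$ makes $u^+=e^{Ct}(\cdots)$ a supersolution and $u^-=e^{-Ct}(\cdots)$ a subsolution. Both have the same initial data as $\hOC$, because $g(x)g(y)\,\dd\mu(x)\,\delta_{\rm D}(x)^{-1}\cdot$ (the Dunkl delta in $\dd\mu^{\rm D}$) gives exactly $\delta_y$ with respect to $\dd\mu$, since $g(y)^2=\delta_{\rm D}(y)/\delta(y)$. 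Apply Lemma \ref{lm:PMP} to $u^+-\hOC_t(\cdot,y)$ and to $\hOC_t(\cdot,y)-u^-$ on $\Omega=(0,\infty)$ and $t\in(\epsilon,1]$, then let $\epsilon\to0$. On $[0,1]$ the factors $e^{\pm Ct}$ are comparable to $1$, which gives the claim.

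The main obstacle is justifying the minimum principle. The initial time is singular, the domain is unbounded, and there is the boundary point $x=0$, where one needs a Neumann/evenness type condition rather than a sign condition. I would handle this in two ways. First, work with even extensions: then $0$ is not a boundary, and $L$ and $L^{\rm D}$ are regular for even functions. Second, truncate to $[0,R]$, using Gaussian decay of both kernels to control $x=R$ up to an $o(1)$ error. For the initial layer, I would compare at time $\epsilon$ through the semigroup property rather than pointwise. Concretely, apply the principle to $\int(\hOC-u^\pm)f\,\dd\mu$ for nonnegative test functions $f$; this turns the statement into one about solutions with continuous initial data $f$, and then conclude pointwise by continuity in $y$. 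An alternative for the pointwise lower bound is to use Schapira's bound \eqref{eq:heat(x,0)} and the product formula \eqref{eq:hOC-Jacobi}, but the comparison route seems cleaner.
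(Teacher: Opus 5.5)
Your proposal is correct and is essentially the paper's proof: conjugate the rank-one rational kernel by $g=(\delta_{\rm D}/\delta)^{1/2}$, use Lemma~\ref{lm:L,L_Dunkl} to get $D(\cdot)/(\cdot)=\pm C-R_{k_1,k_2}$ for $e^{\pm Ct}e^{-\rho^2 t}g(x)g(y)\HDu_t(x,y)$, apply the parabolic minimum principle, and pass to $H^0_t$ via \eqref{eq:rDunkl_est} and \eqref{eq:varphi_0}. The differences are only technical, and they are sound. You get the lower bound from the $e^{-Ct}$ subsolution for $D$, whereas the paper conjugates $\hOC_t$ and uses $D^{\rm D}$. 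You also handle $x=0$ by even extension, where $Lv(0)=(1+2k_1+2k_2)v''(0)$, instead of the paper's appeal to Schapira's bound for $\hOC_t(0,y)$; this spares you the extra multiplicative constant that a boundary comparison valid only up to constants would need as $t\to 0$.
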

\begin{proof}
	Fix $y>0$. Let us denote
	\begin{equation*}
		H_t(x,y) = \HDu_t(x,y) e^{-\rho^2 t} \Big(\frac{\delta_{\rm D}(x)\delta_{\rm D}(y)}{\delta(x)\delta(y)}\Big)^{1/2},
	\end{equation*}
	where the rational Dunkl kernel $\HDu_t$ is associated with the root system $A_1$ and the multiplicity $k_1+k_2$.
	Notice that \eqref{eq:rDunkl_est}, \eqref{eq:varphi_0}, and the bound $x/\sinh x\simeq (1+x)e^{-x}$, $x>0$, imply
	\begin{equation}\label{eq:6}
		H_t(x,y)\simeq H^0_t(x,y),\qquad x,y>0,\ t\in(0,1].
	\end{equation}
	Therefore, our task is reduced to showing $H_t(x,y)\simeq \hOC_t(x,y)$.
	
	By Lemma \ref{lm:L,L_Dunkl} we have (recall that $L$, $L^{\rm D}$ act on $x$ and $D$, $D^{\rm D}$ act on $x$ and $t$)
	\begin{equation*}
	\begin{split}
		\frac{D H_t(x,y)}{H_t(x,y)} = -\rho^2 +\frac{\partial_t \HDu_t(x,y)}{\HDu_t(x,y)} - \frac{L\big( \big(\frac{\delta_{\rm D}(x)}{\delta(x)}\big)^{1/2}\, \HDu_t(x,y)\big)}{\big(\frac{\delta_{\rm D}(x)}{\delta(x)}\big)^{1/2}\, \HDu_t(x,y)}= -R_{k_1,k_2}(x).
	\end{split}
	\end{equation*}
	Here we used the fact that $\HDu_t$ is the fundamental solution of the heat equation for $D^{\rm D}$. Hence,
	\begin{equation*}
		\Big|\frac{D H_t(x,y)}{H_t(x,y)}\Big|\lesssim 1,\qquad x>0.
	\end{equation*}
	
	We shall apply Lemma \ref{lm:PMP} to the function
	\begin{equation*}
		u(x,y,t) = -\hOC_t(x,y)+e^{Ct} H_t(x,y)
	\end{equation*}
	in $(x,t)\in(0,\infty)\times(0,1)$, where $C$ is a large positive constant. We verify that $u$ satisfies the required conditions. Firstly,
	\begin{equation*}
		\frac{D u(x,y,t)}{e^{Ct} H_t(x,y)} = C + \frac{D H_t(x,y)}{H_t(x,y)}\geq 0,
	\end{equation*}
	for $C$ large enough. Secondly, for $t=0$ and a test function $f$ we have
	\begin{multline*}
	\lim_{t\to 0} \int u(x,y,t) f(y) \, \dd \mu(y) =   -\lim_{t\to 0} \int \hOC_t(x,y) f(y) \, \dd \mu(y)\\
	 +  \Big(\frac{\delta_{\rm D}(x)}{\delta(x)}\Big)^{1/2}	\lim_{t\to 0}  \int \HDu_t(x,y) f(y)\Big(\frac{\delta(y)}{\delta_{\rm D}(y)}\Big)^{1/2} \, \dd \mu^{\rm D}(y)=-f(x)+f(x)=0.
	\end{multline*}
	Thus, $u(x,y,0)=0$. Lastly, if $x=0$ then by Schapira's result we have $H_t(0,y)\simeq \hOC_t(0,y)$, independently of $y$. Hence, for $C$ large enough we have $u(0,y,t)\geq 0$. Therefore, by Lemma \ref{lm:PMP}
	\begin{equation*}
		u(x,y,t)\geq 0,\qquad x,y>0,\ t\in(0,1].
	\end{equation*}
	This and \eqref{eq:6} imply $H_t(x,y)\gtrsim \hOC_t(x,y)$.
	
	In order to prove the opposite bound it suffices to show that for $x,y>0$ and $t\in(0,1]$ it holds that 
	\begin{equation*}
		\widetilde{u}(x,y,t):= e^{Ct} e^{\rho^2 t}\Big(\frac{\delta(x)\delta(y)}{\delta_{\rm D}(x)\delta_{\rm D}(y)}\Big)^{1/2 } \hOC_t(x,y) -\HDu_t(x,y)\geq 0.
	\end{equation*}
	Much as above, we use Lemma \ref{lm:PMP} for $\widetilde{u}$ in $(x,t)\in (0,\infty)\times (0,1]$, but this time for the operator $D^{\rm D}$. For that purpose, observe that Lemma \ref{lm:L,L_Dunkl} implies
	\begin{multline*}
			\frac{D^{\rm D}\big[  e^{\rho^2 t} \big(\frac{\delta(x)}{\delta_{\rm D}(x)}\big)^{1/2 }\, \hOC_t(x,y) \big]}{ e^{\rho^2 t} \big(\frac{\delta(x)}{\delta_{\rm D}(x)}\big)^{1/2 }\, \hOC_t(x,y)}\\
			= \rho^2 +\frac{\partial_t \hOC_t(x,y)}{\hOC_t(x,y)}-\frac{L^{\rm D} \big[\big(\frac{\delta(x)}{\delta_{\rm D}(x)}\big)^{1/2 } \hOC_t(x,y) \big] }{\big(\frac{\delta(x)}{\delta_{\rm D}(x)}\big)^{1/2 } \hOC_t(x,y)}=  -R_{k_1,k_2}(x).
	\end{multline*}
	Since $\HDu_t$ is the fundamental solution of the heat equation, for $C$ large enough we have
	\begin{equation*}
		D^{\rm D}\, \widetilde{u}(x,y,t)= D^{\rm D} \Big[ e^{Ct} e^{\rho^2 t} \Big(\frac{\delta(x)\delta(y)}{\delta_{\rm D}(x)\delta_{\rm D}(y)}\Big)^{1/2 }  \hOC_t(x,y) \Big]=C-R_{k_1,k_2}(x)\geq 0.
	\end{equation*}
	Verification of the boundary conditions for  $\widetilde{u}$ is fully analogous to what was shown for $u$.
	
\end{proof}

\subsection{On-diagonal estimate}\label{section:on diagonal}

In this subsection we prove $\hOC_t(x,x)\simeq H^0_t(x,x)$ for all $x>0$. This will allow us to use the parabolic minimum principle in the sequel.

\begin{prop}\label{prop:diagonal}
	We have
	\begin{equation*}
		\hOC_t(x,x)\simeq H^0_t(x,x),
	\end{equation*}
	uniformly in $x>0$ and $t>0$.
\end{prop}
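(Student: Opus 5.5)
We restrict to $t\ge1$: for $t\le1$ the claim is contained in Proposition~\ref{prop:small_t}. The plan is to reduce the diagonal kernel to the kernel at the origin, where the sharp bound \eqref{eq:heat(x,0)} is already available.

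Combining \eqref{eq:hOC-Jacobi} with \eqref{eq:heat(x,0)} and \eqref{eq:varphi_0}, and writing $k=k_1+k_2$, gives for $t\ge1$
\begin{equation*}
	\hOC_t(x,x)\simeq t^{-k-1/2}e^{-\rho^2t}\int_0^{2x}\varphi_0(z)\,e^{-\frac{z^2}{4t}}(t+z)^{k-1}\,W\Big(\frac{x}{2},\frac{x}{2},\frac{z}{2}\Big)\,\dd\mu(z).
\end{equation*}
Here we used that $W(\frac{x}{2},\frac{x}{2},\frac{z}{2})$ is supported in $z\in[0,2x]$, and that $(1+z)e^{-\rho z}\simeq\varphi_0(z)$ and $1+t+z\simeq t+z$ when $t\ge1$.

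The target is
\begin{equation*}
	H^0_t(x,x)=t^{-1/2}e^{-\rho^2t}\varphi_0(x)^2\,\frac{(t+(1+x)^2)^{k-1}}{(t+x^2)^k}.
\end{equation*}
We split according to the size of $x$ relative to $\sqrt t$.

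\emph{Case $x\le\sqrt t$.} On the support we have $z\le2x\le2\sqrt t$, so $e^{-z^2/4t}\simeq1$ and $(t+z)^{k-1}\simeq t^{k-1}$. The product formula \eqref{eq:phi_prod} then yields $\hOC_t(x,x)\simeq t^{-3/2}e^{-\rho^2t}\varphi_0(x)^2$. This agrees with $H^0_t(x,x)$, because in this range $t+x^2\simeq t$ and $t+(1+x)^2\simeq t$.

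\emph{Case $x\ge\sqrt t\ (\ge1)$.} The target becomes $t^{-1/2}e^{-\rho^2t}\varphi_0(x)^2x^{-2}$. The Gaussian factor effectively localizes the integral to $z\lesssim\sqrt t$, where $(t+z)^{k-1}\simeq t^{k-1}$; the tail $z\gg\sqrt t$ is controlled by summing over dyadic shells $z\simeq 2^j\sqrt t$. What is needed is the distribution of the measure $\nu_x(\dd z)=\varphi_0(z)W(\frac x2,\frac x2,\frac z2)\,\dd\mu(z)$, whose total mass is $\varphi_0(x)^2$ by \eqref{eq:phi_prod}. The key auxiliary estimate I would aim for is
\begin{equation*}
	\nu_x\big([0,s]\big)\simeq\varphi_0(x)^2\,\frac{(1+s)^2}{(1+x)^2},\qquad 0\le s\le 2x,
\end{equation*}
with the upper bound holding uniformly in $s$. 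Granting it, the piece $z\le\sqrt t$ contributes $t^{-3/2}\cdot t\,x^{-2}\varphi_0(x)^2$, as required. The dyadic shells contribute $t^{-3/2}\sum_j e^{-4^j/4}\,2^{j(2k-2)}\cdot 4^j t\,x^{-2}\varphi_0(x)^2$, which is of the same order.

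\emph{Main obstacle.} The estimate for $\nu_x([0,s])$ requires going into Koornwinder's explicit formula for the kernel $W$ (see \cite[pp.~58--59]{Koornwinder_book}). One has to show that on the diagonal, near $z=0$, $W(\frac x2,\frac x2,\frac z2)\,\delta(z)$ behaves like $z\,e^{-2\rho x}(1+x)^{-1}\times$(a bounded factor) for large $x$. This is the precise analogue of the rational Dunkl fact that the diagonal product measure places mass $\simeq(s/x)^{2}$ near the origin in rank one; it is consistent with \eqref{eq:rDunkl_est}. I would first check this in the geometric case $k_2=0$, $k_1=1$, where $W$ is explicit and elementary, and then treat general $k_1,k_2$ through the Legendre/hypergeometric form of $W$.

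If this route proves too technical, the fallback is the spectral formula \eqref{eq:hOC_def_r1} with $x=y$. It gives the integral $\int_0^\infty e^{-\lambda^2t}|\varphi_{i\lambda}(x)|^2|c(\lambda)|^{-2}\dd\lambda$ of positive terms. For $x\ge\sqrt t$ it is bounded via the Harish-Chandra expansion $|\varphi_{i\lambda}(x)|^2\simeq|c(\lambda)|^2\delta(x)^{-1}$ in the regime $\lambda x\gtrsim1$, together with \eqref{eq:c_bound}. For $\lambda x\lesssim1$ one uses $\varphi_{i\lambda}\simeq\varphi_0$. Since all terms are positive, both the upper and the lower bounds follow from these regimes.
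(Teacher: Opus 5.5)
The reduction to $t\ge1$ and the case $x\le\sqrt t$ (via \eqref{eq:hOC-Jacobi}, \eqref{eq:heat(x,0)} and \eqref{eq:phi_prod}) are exactly the paper's argument; the paper splits at $a\sqrt t$ with $a$ large, which changes nothing there. The gap is the case $x\ge\sqrt t$, which is the real content of the proposition.

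Reducing that case to the mass profile of $\nu_x$ is legitimate, and if the profile were proved you would have a spectral-free argument. But you only assert it, and you flag it yourself as the main obstacle. Three things are wrong with it as written:
\begin{enumerate}
\item As stated it is false for $s<1$, since $\nu_x([0,s])\to0$ as $s\to0$.
\item Your heuristic is off. Near $z=0$ the density of $W(\frac x2,\frac x2,\frac z2)\delta(z)$ is of order $z^{2k-1}e^{-2\rho x}$, with no factor $(1+x)^{-1}$. The rational analogue has mass $(s/x)^{2k}$, not $(s/x)^2$.
\item $z\approx0$ is not the relevant regime. At $s=\sqrt t\ge1$ you need the bulk behaviour: for $1\le z\le x$ this density must be $\simeq e^{\rho z-2\rho x}$, so that $\nu_x$ has density $\simeq(1+z)e^{-2\rho x}$.
\end{enumerate}
The bulk estimate can be extracted from Koornwinder's explicit formula for $W$, but it is a genuine computation and your proposal does not carry it out.

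Your fallback does not close the gap for the lower bound. For real $\lambda$ the expansion reads $\varphi_{i\lambda}(x)=e^{-\rho x}\big(2|c(\lambda)|\cos(\lambda x+\arg c(\lambda))+E(\lambda,x)\big)$. Hence $|\varphi_{i\lambda}(x)|^2\simeq|c(\lambda)|^2\delta(x)^{-1}$ is only an upper bound; it fails near the zeros of the cosine. For $x\ge\sqrt t$, the regime $\lambda x\gtrsim1$ covers essentially the whole Gaussian window $\lambda\lesssim t^{-1/2}$. The region $\lambda\le\eta/x$, where $\varphi_{i\lambda}(x)\simeq\varphi_0(x)$, contributes only about $\varphi_0(x)^2x^{-3}$. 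That is $\sqrt t/x$ times the target $t^{-1/2}e^{-2\rho x}\simeq t^{-1/2}\varphi_0(x)^2x^{-2}$.

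So positivity of the integrand cannot give the lower bound; you must average the $\cos^2$. This is what the paper does. It writes $|\varphi_{i\lambda}(x)|^2/|c(\lambda)|^2$ as $e^{-2\rho x}$ times $2+\frac{c(\lambda)}{\overline{c(\lambda)}}e^{2i\lambda x}+\frac{\overline{c(\lambda)}}{c(\lambda)}e^{-2i\lambda x}$ plus error terms. The constant gives $\sqrt{\pi/t}$ and the errors give $O(e^{-\varepsilon x}t^{-1})$. A contour shift shows the oscillatory integrals are $O\big(t^{-1/2}(e^{-x^2/t}+\sqrt t/x)\big)$. These are negligible only for $x\ge a\sqrt t$ with $a$ large, which is why the paper splits at $a\sqrt t$.
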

\begin{proof}
	Notice that by Proposition \ref{prop:small_t} we may assume $t\geq 1$. 
	
	Let $a$ be a large positive constant. We shall consider two cases. Firstly, let $x\leq a\sqrt{t}$. Recall that (see \cite{Scha} or \cite{KawLiu})
	\begin{equation*}
		\hOC_t(x,0)\simeq t^{-k_1-k_2-1/2} e^{-\rho^2 t} e^{-x^2/(4t)}\varphi_0(x) (1+t+x)^{k_1+k_2-1} \simeq t^{-3/2} e^{-\rho^2 t} \varphi_0(x),
	\end{equation*}
	where the last bound holds under our the assumptions of the present case. Thus, \eqref{eq:hOC-Jacobi} and \eqref{eq:phi_prod} give
	\begin{equation*}
		\hOC_t(x,x)\simeq t^{-3/2} e^{-\rho^2 t} [\varphi_0(x)]^2,
	\end{equation*}
	and the latter expression is comparable with $H^0_t(x,x)$ for $t\geq 1$ and $x\leq a\sqrt{t}$.
	
	We move on to the second case: $x\geq a\sqrt{t}$. This time we will rely on the following bound for the spherical function, see \cite[(5.1.27)]{GanVar} (mind the convention for the Harish-Chandra function), 
	\begin{equation*}
		\varphi_{i\lambda}(x) = e^{-\rho x}\Big(c(\lambda) e^{i\lambda x} +c(-\lambda) e^{-i\lambda x} + E(\lambda,x)\Big),
	\end{equation*}
	where $E$ is the error part satisfying the bound
	\begin{equation}\label{eq:Error}
		\big| E(\lambda, x)\big| \lesssim (1+\lambda)^N e^{-\varepsilon x}  
	\end{equation}
	for certain $\varepsilon>0$ and $N\in\NN$.
	
	Since $x\mapsto \varphi_\lambda(x)$ is even and $c(-\lambda)=\overline{c(\lambda)}$, the formula \eqref{eq:hOC_def_r1} implies
	\begin{multline*}
		\hOC_t(x,x) = e^{-\rho^2 t} \int_0^\infty e^{-\lambda^2 t} \Big(\frac{\varphi_{i\lambda}(x)}{|c(\lambda)|}\Big)^2\dd \lambda= e^{-\rho^2 t} e^{-2\rho x} \int_0^\infty e^{-\lambda^2 t}\\
		\times\Big[2+ \frac{c(\lambda)e^{2i\lambda x}}{\overline{c(\lambda)}} +  \frac{\overline{c(\lambda)}e^{-2i\lambda x}}{c(\lambda)}+\frac{E(\lambda,x)^2}{|c(\lambda)|^2}  +2E(\lambda,x)\Big(\frac{e^{i\lambda x}}{c(-\lambda)} + \frac{e^{-i\lambda x}}{c(\lambda)}  \Big)\Big]\dd\lambda.
	\end{multline*}
	We shall bound or compute each term of the above expression.
	
	Firstly, we have
	\begin{equation*}
		\int_0^\infty 2 e^{-\lambda^2 t}\dd\lambda = \frac{\sqrt{\pi}}{\sqrt{t}}. 
	\end{equation*}
	Further, \eqref{eq:c_bound} and \eqref{eq:Error} imply there exist $N\in\NN$ and $\varepsilon>0$ such that
	\begin{equation*}
		\int_0^\infty e^{-\lambda^2 t} |E(\lambda,x)|^2 \frac{\dd\lambda}{|c(\lambda)|^2} \lesssim e^{-\varepsilon x}\int_0^\infty e^{-\lambda^2 t} \lambda^2 (1+\lambda)^N \dd\lambda \lesssim e^{-\varepsilon x} t^{-3/2}.
	\end{equation*}
	Much as above,
	\begin{equation*}
		\int_0^\infty e^{-\lambda^2 t} \frac{|E(\lambda,x)|}{|c(\lambda)|}\dd\lambda \lesssim e^{-\varepsilon x} \int_0^\infty e^{-\lambda^2 t} \lambda (1+\lambda)^N \dd\lambda \lesssim e^{-\varepsilon x} t^{-1}.
	\end{equation*}
	Finally, it remains to deal with
	\begin{equation*}
		\int_0^\infty e^{-\lambda^2 t}  \frac{c(\lambda)e^{2i\lambda x}}{\overline{c(\lambda)}} \dd\lambda;
	\end{equation*}
	the other term can be treated in the same manner. For that purpose we consider 
	\begin{equation*}
		F(z)=e^{-z^2 t} \frac{c(z)e^{2i z x}}{\overline{c(z)}}
	\end{equation*}
	in the rectangle ${\rm Re }\,z\in [r,R]$, ${\rm Im}\, z\in [0,x/t]$ in the complex plane, where $R>r>0$. Since $F$ is analytic in the said domain the Cauchy integral theorem gives
	\begin{equation*}
		\int_{r}^R F(s) \dd s =  -\int_0^{x/t} F(R+is)\dd s + \int_0^{x/t} F(r+is)\dd s+\int_r^R F(s+ix/t)\dd s.
	\end{equation*}
	
	We shall bound the right hand side. Observe that for the first term we have
	\begin{equation*}
		\int_0^{x/t} |F(R+is)|\dd s \leq e^{-R^2 t}\int_0^{x/t} e^{s(st-x)} e^{-xs} \dd s\leq \frac{e^{-tR^2}}{x}.
	\end{equation*}
	Much as above,
	\begin{equation*}
		\int_0^{x/t} |F(r+is)|\dd s \leq\frac{e^{-tr^2}}{x}\leq \frac{1}{x}.
	\end{equation*}
	Lastly, for the third term we obtain
	\begin{equation*}
	\int_r^R |F(s+ix/t)|\dd s \leq e^{-x^2/t} \int_0^\infty e^{-s^2 t}\dd s \simeq \frac{e^{-x^2/t}}{\sqrt{t}}. 
	\end{equation*}
	
	Thus,
	\begin{equation*}
		\Big|\int_0^\infty e^{-\lambda^2 t}  \frac{c(\lambda)e^{2i\lambda x}}{\overline{c(\lambda)}} \dd\lambda\Big|\lesssim \frac{1}{\sqrt{t}}\Big(e^{-x^2/t}+\frac{\sqrt{t}}{x}\Big).
	\end{equation*}
	By choosing $a$ large enough in the condition $x\geq a\sqrt{t}$, we arrive at
	\begin{equation*}
		\hOC_t(x,x)\simeq e^{-\rho^2 t} e^{-2\rho x} t^{-1/2},
	\end{equation*}
	which by \eqref{eq:varphi_0} is comparable with $H^0_t(x,x)$ in the considered region.
	
	This concludes the proof of the proposition.
\end{proof}

\subsection{Large time, lower bound}
From now on, whenever it is convenient, we abbreviate $k_1+k_2$ to $k$.

Let $\chi$ be a smooth non-increasing function on $\RR$, such that $\chi(s)=1$ for $s\leq 1$, $\chi(s)=0$ for $s\geq 2$, and $0\le\chi(s)\le 1$ for $s\in[1,2]$. We define 
\begin{equation*}
	H_t^-(x,y) =  C_1^-\Big[1-\chi\Big(\frac{x}{t}\Big)\Big] H^{\psi_1}_t(x,y) + \chi\Big(\frac{x}{t}\Big) H^{\psi_2^-}_t(x,y), 
\end{equation*}
where 
\begin{equation}\label{eq:C_1-}
	C_1^-= e^{\sqrt{5}\CG},
\end{equation}
$\CG$ was defined in \eqref{eq:CG}, and
\begin{align}\label{eq:psi1_psi2-}
		\begin{split}
		\psi_1(x,y,t) &= \frac{(x+xy)^{k-1}}{(t+xy)^k},\\
		\psi_2^-(x,y,t) &= \frac{1}{2t+xy} e^{\frac{\CG \sqrt{x^2+1}}{t} },
		\end{split}
\end{align}

\begin{lm}\label{lm:lowercompar}
	The following bound holds uniformly in $t\geq 1$ and $0<y\le x<\infty$,
	\begin{equation*}
		H^-_t(x,y)\simeq H^0_t(x,y).
	\end{equation*}
\end{lm}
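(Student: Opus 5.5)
Since $H^{\psi}_t$ differs from $H^0_t$ only through the factor $\psi$, and $H^-_t$ is a convex combination of $C_1^- H^{\psi_1}_t$ and $H^{\psi_2^-}_t$ with nonnegative weights $1-\chi(x/t)$ and $\chi(x/t)$ that sum to one, the statement reduces to comparing scalar functions. It suffices to show
\begin{equation*}
\psi_1(x,y,t)\simeq\psi_0(x,y,t)\quad\text{when } x\geq t,\qquad \psi_2^-(x,y,t)\simeq \psi_0(x,y,t)\quad\text{when } x\leq 2t,
\end{equation*}
uniformly in $t\geq 1$ and $0<y\le x$. Indeed, on $x\le t$ only the second term is present. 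On $x\ge 2t$ only the first term is present, and $C_1^-$ is a fixed constant. On $t\le x\le 2t$ both estimates hold, so the convex combination is comparable to $\psi_0$.

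For $\psi_2^-$, the exponential factor is harmless. When $x\le 2t$ and $t\geq 1$ we have $\sqrt{x^2+1}/t\le \sqrt{4t^2+1}/t\le\sqrt5$, so $1\le e^{\CG\sqrt{x^2+1}/t}\le e^{\sqrt5\CG}$. It remains to check $\frac{1}{2t+xy}\simeq \psi_0$ in this region. Since $y\le x\le 2t$ and $t\ge1$, we get $1+x+y\lesssim t$, hence $t+1+x+y+xy\simeq t+xy$. Then
\begin{equation*}
\psi_0\simeq (t+xy)^{k-1}(t+xy)^{-k}=(t+xy)^{-1}\simeq (2t+xy)^{-1}.
\end{equation*}

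For $\psi_1$, assume $x\ge t\ge1$ and $y\le x$. Then $x+xy = x(1+y)$. We need to show $t+1+x+y+xy\simeq x(1+y)$. The upper bound holds because $t\le x$, $1\le x$ and $y\le x$. The lower bound is immediate. Since $k-1$ may be negative, I use two-sided comparability rather than one-sided bounds, which costs nothing. The denominators $(t+xy)^k$ coincide, so $\psi_1\simeq\psi_0$. This finishes the argument.

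I do not expect a genuine obstacle. The only delicate point is keeping the exponent $k-1$ of either sign, which the two-sided comparability handles. The constant in the final comparison depends on $\CG$ and $k$ only.
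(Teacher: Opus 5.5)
Your proposal is correct and follows the paper's proof. Both reduce the statement to the scalar factors and compare $\psi_1$ and $\psi_2^-$ with $\psi_0$ on the regions $x\le t$, $t\le x\le 2t$ and $x\ge 2t$. You additionally supply the elementary inequalities the paper leaves implicit, namely $\sqrt{x^2+1}/t\le\sqrt5$ for $x\le 2t$ and $t+1+x+y+xy\simeq x(1+y)$ for $x\ge t$, and you note that two-sided comparability handles either sign of $k-1$.
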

\begin{proof}
	Let $0<y\le x<\infty$ and $t\geq 1$. We show that
	\begin{equation*}
		C_1^-\Big[1-\chi\Big(\frac{x}{t}\Big)\Big] \psi_1(x,y,t) + \chi\Big(\frac{x}{t}\Big) \psi^-_2(x,y,t) \simeq \psi_0(x,y,t),
	\end{equation*}
	where $\psi_0$ is defined in \eqref{eq:psi_0}, with the implicit constants independent of $x,y,t$. Denote the left hand side of the above expression by $\psi$.
	
	In the case $x\leq t$ we have
	\begin{equation*}
	\psi(x,y,t)= \psi_2^-(x,y,t)\simeq \frac{1}{t+xy} \simeq	  \psi_0(x,y,t)  .
	\end{equation*}
	Secondly, if $x\geq 2t$, then
	\begin{equation*}
		\psi(x,y,t)=C_1^-\psi_1(x,y,t)\simeq \psi_0(x,y,t).
	\end{equation*}
	Finally, if  $t\le x\le 2t$, then
	\begin{equation*}
		\psi_1(x,y,t) \simeq \frac{1}{t+xy},\qquad \psi_2^-(x,y,t)\simeq \frac{1}{t+xy}, \qquad \psi_0(x,y,t)\simeq \frac{1}{t+xy}.
	\end{equation*}
	Thus, $\psi(x,y,t)\simeq \psi_0(x,y,t)$.
	
	This concludes the proof.
	
\end{proof}

\begin{lm}\label{lm:Hpsi1}
	One has
	\begin{equation*}
		\Big|\frac{DH^{\psi_1}_t(x,y)}{H^{\psi_1}_t(x,y)}\Big| \lesssim \frac{1}{t^2},	
	\end{equation*}
	uniformly in $t\geq 1$ and $0<y\le x<\infty$ satisfying $t\le x$.
\end{lm}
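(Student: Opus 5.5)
The plan is to apply Lemma \ref{lm:gen_form} with $\psi=\psi_1$ and bound every resulting term by $t^{-2}$ using $x\geq t\geq 1$ and $0<y\le x$. The logarithmic derivatives of $\psi_1$ are explicit. Since
\begin{equation*}
\log\psi_1=(k-1)\log x+(k-1)\log(1+y)-k\log(t+xy),
\end{equation*}
we get
\begin{equation*}
\eth_x\psi_1=\frac{k-1}{x}-\frac{ky}{t+xy},\qquad \eth_t\psi_1=-\frac{k}{t+xy},
\end{equation*}
\begin{equation*}
\eth_x^2\psi_1=(\eth_x\psi_1)^2-\frac{k-1}{x^2}+\frac{ky^2}{(t+xy)^2}.
\end{equation*}
Throughout I will use the elementary inequalities $\frac{y}{t+xy}\le\frac1x$ and $\frac{1}{x}\le\frac{1}{t}$.

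First I handle the term containing $G$. By Lemma \ref{lm:G}, $|G(x)-1|\lesssim (1+x)^{-1}\le x^{-1}$. The factor it multiplies satisfies
\begin{equation*}
\Big|\frac{x-y}{xt}-\frac{2\eth_x\psi_1}{x}\Big|\lesssim \frac1t+\frac1{x^2}\lesssim \frac1t,
\end{equation*}
because $0\le x-y\le x$ and $|\eth_x\psi_1|\lesssim 1/x$. Hence this whole term is $O\big(1/(xt)\big)=O(t^{-2})$.

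Next come the remaining terms,
\begin{equation*}
\eth_t\psi_1+\frac{x-y}{xt}+\Big(\frac{x-y}{t}-\frac2x\Big)\eth_x\psi_1-\eth_x^2\psi_1.
\end{equation*}
The terms carrying $1/x$ or $1/x^2$ prefactors, namely $-\frac{2}{x}\eth_x\psi_1$ and $-\eth_x^2\psi_1$, are combinations of $x^{-2}$, $\frac{y}{x(t+xy)}$ and $\frac{y^2}{(t+xy)^2}$. Each of these is at most $x^{-2}\le t^{-2}$.

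The genuinely dangerous terms are of order $1/t$:
\begin{equation*}
-\frac{k}{t+xy}+\frac{1}{t}-\frac{y}{xt}+\frac{x-y}{t}\Big(\frac{k-1}{x}-\frac{ky}{t+xy}\Big).
\end{equation*}
I expect them to cancel exactly, and checking this is the key step. Expanding, the contributions $\frac kt-\frac{k}{t+xy}-\frac{kxy}{t(t+xy)}$ sum to zero. What is left is $-\frac{ky}{xt}+\frac{ky^2}{t(t+xy)}=-\frac{ky}{x(t+xy)}$, which is again at most $k x^{-2}\le k t^{-2}$.

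Collecting all terms gives $|DH^{\psi_1}_t/H^{\psi_1}_t|\lesssim t^{-2}$ uniformly in the stated region. The main obstacle is the bookkeeping of this $1/t$-cancellation. The choice of $\psi_1$ (numerator $x^{k-1}$, denominator $(t+xy)^k$) is exactly what makes the $\frac{x-y}{t}\eth_x\psi_1$ term balance $\eth_t\psi_1$ and $\frac{x-y}{xt}$. Every other term is crude and bounded by $1/x^2$ or $1/(xt)$ via $x\ge t$.
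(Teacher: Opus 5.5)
Your proposal is correct and follows the paper's proof: you apply Lemma~\ref{lm:gen_form} to $\psi_1$ with the same logarithmic derivatives (your $\eth_x\psi_1=\frac{k-1}{x}-\frac{ky}{t+xy}$ equals the paper's $\frac{kt}{x(t+xy)}-\frac{1}{x}$), the order-$1/t$ terms cancel exactly, and the remainder is bounded by $\frac{1}{tx}+\frac{1}{x^2}\lesssim t^{-2}$ using Lemma~\ref{lm:G} and $1\le t\le x$. The only difference is presentation: the paper first simplifies everything into the closed form $-\frac{ky^2}{(t+xy)^2}-\frac{k^2t^2}{x^2(t+xy)^2}+\frac{kt}{x^2(t+xy)}$ plus the $G$-term and then estimates, while you verify the cancellation term by term, and your non-$G$ terms add up to exactly that same expression.
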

\begin{proof}
	Firstly, observe that
	\begin{align*}
		\eth_t \psi_1(x,y,t) &= -\frac{k}{t+xy},\\
		\eth_x \psi_1(x,y,t) &= \frac{kt}{x(t+xy)}-\frac{1}{x},\\
		\eth_x^2 \psi_1(x,y,t) &= \frac{1}{x^2} \Big[ \Big(\frac{kt}{t+xy}-1\Big)^2 -\frac{kt(t+2xy)}{(t+xy)^2} +1\Big] = \frac{2}{x^2} +\frac{k^2t^2 -kt(3t+4xy)}{x^2 (t+xy)^2}.
	\end{align*}
	
	By applying Lemma \ref{lm:gen_form} we obtain
	\begin{equation*}
		\begin{split}
			\frac{D H^{\psi_1}_t(x,y)}{H_t^{\psi_1}(x,y)} &= \big( G(x)-1\big) \Big( \frac{x-y}{xt} -\frac{2kt}{x^2(t+xy)} +\frac{2}{x^2}\Big) -\frac{k}{t+xy} +\frac{x-y}{xt}\\
			&\qquad+\Big(\frac{x-y}{t} -\frac{2}{x}\Big)\Big(\frac{kt}{x(t+xy)}-\frac{1}{x}\Big) -\frac{2}{x^2} -\frac{k^2t^2 -kt(3t+4xy)}{x^2 (t+xy)^2}\\
			&=\big( G(x)-1\big) \Big( \frac{x-y}{xt} -\frac{2kt}{x^2(t+xy)} +\frac{2}{x^2}\Big)\\
			&\qquad-\frac{ky^2}{(t+xy)^2}-\frac{k^2 t^2 }{x^2 (t+xy)^2}  +\frac{kt}{x^2 (t+xy)}.
		\end{split}
	\end{equation*}
	
	Thus, the inequality $1\le t\le x$ and Lemma \ref{lm:G} imply that
	\begin{equation*}
		\Big|\frac{D(H_t^{\psi_1}(x,y))}{H_t^{\psi_1}(x,y)}\Big| \lesssim \frac{1}{tx} +\frac{1}{x^2}\lesssim \frac{1}{t^2},
	\end{equation*}
	which finishes the proof.
\end{proof}

\begin{lm}\label{lm:Hpsi2-}
	Let $t\geq 1$ and $0<y\le x<\infty$ be such that $x\le 2t$. The following identity holds:
	\begin{equation*}
		\frac{D H_t^{\psi_2^-}(x,y)}{H_t^{\psi_2^-}(x,y)} = I_1(x,y,t) +I_2(x,y,t),
	\end{equation*}
	where $I_1(x,y,t)=O(t^{-3/2})$ and $I_2(x,y,t)\leq 0$.
\end{lm}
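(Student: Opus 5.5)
The plan is to compute $DH^{\psi_2^-}_t/H^{\psi_2^-}_t$ explicitly via Lemma \ref{lm:gen_form}, with $\psi=\psi_2^-$, and then sort the resulting terms by sign. Write $s=\sqrt{x^2+1}$. Since $\log\psi_2^-=-\log(2t+xy)+\CG s/t$, the logarithmic derivatives are
\begin{align*}
\eth_t\psi_2^- &= -\frac{2}{2t+xy}-\frac{\CG s}{t^2},\qquad
\eth_x\psi_2^- = -\frac{y}{2t+xy}+\frac{\CG x}{ts},\\
\eth_x^2\psi_2^- &= \big(\eth_x\psi_2^-\big)^2+\frac{y^2}{(2t+xy)^2}+\frac{\CG}{ts^3}.
\end{align*}
For this computation it is easier to use the formula from the proof of Lemma \ref{lm:gen_form} before the final rearrangement:
\begin{equation*}
\frac{DH^{\psi}_t}{H^{\psi}_t}=G(x)\Big(\frac{x-y}{xt}-\frac{2\eth_x\psi}{x}\Big)+\eth_t\psi+\frac{(x-y)\eth_x\psi}{t}-\eth_x^2\psi .
\end{equation*}

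I will group the terms in three blocks. First, the two $\CG$-terms of order $t^{-2}$ combine as
\begin{equation*}
-\frac{\CG s}{t^2}+\frac{\CG x(x-y)}{t^2 s}=-\frac{\CG(1+xy)}{t^2 s}\le 0 .
\end{equation*}
Second, the rational terms coming from $-y/(2t+xy)$ organize into
\begin{equation*}
B:=\frac{1}{t}-\frac{y^2}{t(2t+xy)}=\frac{2t+xy-y^2}{t(2t+xy)} .
\end{equation*}
This $B$ appears with coefficient $G(x)$ in the first bracket and with coefficient $-1$ from $\eth_t\psi+(x-y)\eth_x\psi/t$. Hence these rational terms total $(G(x)-1)B$, and $0\le B\le 1/t$ because $y\le x$.

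Third, the remaining $\CG$-terms of order $t^{-1}$ are $-2G(x)\CG/(ts)$, coming from $-2G\eth_x\psi/x$, together with $-\CG/(ts^3)$, coming from $-\eth_x^2\psi$. They sum to
\begin{equation*}
-\frac{\CG}{ts}\Big(2G(x)+\frac{1}{1+x^2}\Big)\le -\frac{K}{ts}
\end{equation*}
by the definition \eqref{eq:CG} of $\CG$. By Lemma \ref{lm:G} we have $|(G(x)-1)B|\le K/((1+x)t)\le K/(ts)$, since $s\le 1+x$. Thus $(G(x)-1)B$ plus this block is nonpositive.

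Everything else is manifestly nonpositive: $-(\eth_x\psi_2^-)^2$, $-y^2/(2t+xy)^2$, and the first block. So I would put all these pieces into $I_2\le 0$ and let $I_1$ collect whatever genuinely small remainders appear once the algebra is checked carefully. I expect these remainders to be at most $O(t^{-2})$, hence $O(t^{-3/2})$ for $t\ge1$. If some sign in the bookkeeping turns out differently, I would bound the offending term crudely using $x\le 2t$ and $t\ge1$: each surviving mixed term carries an extra factor $1/t$ or $x/t^2$, and $x/t^2\lesssim t^{-1}$ can be upgraded via $s/t^2$-type estimates. The needed rate is only $t^{-3/2}$, which leaves room.

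The main obstacle is the cancellation between $(G(x)-1)B$ and the negative $\CG$-term. This is the only place where a term of exact order $t^{-1}$, which is not integrable against the later $\int_1^\infty$ gluing, must be controlled by sign rather than by size. The choice of $\CG$ in \eqref{eq:CG} and of the factor $e^{\CG s/t}$ in $\psi_2^-$ is designed precisely for this. So I would verify this step first and carefully, in particular the inequality $s\le 1+x$ and the positivity of $B$ under $y\le x$.
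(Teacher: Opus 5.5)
Your proposal is correct, and your bookkeeping is complete. The terms from the pre-rearrangement formula of Lemma~\ref{lm:gen_form} split exactly into the following pieces, with nothing left over:
\begin{itemize}
\item $(G(x)-1)B$;
\item $-\frac{\CG}{ts}\bigl(2G(x)+\frac{1}{1+x^2}\bigr)$;
\item $-\frac{\CG(1+xy)}{t^2 s}$;
\item $-(\eth_x\psi_2^-)^2-\frac{y^2}{(2t+xy)^2}$.
\end{itemize}
So you can take $I_1\equiv 0$ outright. You should drop the hedging about residual terms. The fallback you sketch via $x/t^2$- or $s/t^2$-type estimates is too vague to count as an argument, but it is never needed.

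The computation and the key absorption step via \eqref{eq:CG} are the same as in the paper. The difference is that the paper splits into two cases:
\begin{itemize}
\item For $xy\ge t$, it places $(G(x)-1)\bigl(\frac1t-\frac{y^2}{t(2t+xy)}-\frac{2\CG}{t\sqrt{x^2+1}}\bigr)$ into $I_1$ and uses $x\ge\sqrt t$ to get $O(t^{-3/2})$.
\item For $xy\le t$, it runs exactly your sign argument.
\end{itemize}
You observe that $0\le B\le 1/t$ holds for all $0<y\le x$, because $y^2\le xy$. This shows the paper's second-case argument already works on the whole range, so the case distinction is unnecessary. You therefore obtain the slightly stronger fact that $D H_t^{\psi_2^-}(x,y)\le 0$ for all $t>0$ and $0<y\le x$, without using $t\ge 1$ or $x\le 2t$.
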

\begin{proof}
By a direct computation we obtain
\begin{align*}
	\eth_t \psi_2^-(x,y,t) &= -\frac{2}{2t+xy} -\frac{\CG \sqrt{x^2+1}}{t^2},\\
	\eth_x \psi_2^-(x,y,t) &= -\frac{y}{2t+xy}+\frac{\CG x}{t\sqrt{x^2+1}},\\
	\eth^2_x \psi_2^-(x,y,t) &= \frac{2y^2}{(2t+xy)^2} +\frac{\CG^2 x^2}{t^2(x^2+1)}+\frac{\CG}{t(x^2+1)^{3/2}} -\frac{2\CG xy}{t(2t+xy)\sqrt{x^2+1}}.
\end{align*}	
Hence, by Lemma \ref{lm:gen_form}
\begin{equation*}
	\begin{split}
	&\hspace{-0.5cm}\frac{D H_t^{\psi_2^-}(x,y)}{H_t^{\psi_2^-}(x,y)}\\
	 &= \big( G(x)-1\big) \Big( \frac{x-y}{xt} +\frac{2y }{x(2t+xy)} -\frac{2\CG}{t\sqrt{x^2+1}}\Big)  -\frac{2}{2t+xy} -\frac{\CG \sqrt{x^2+1}}{t^2}\\
	& \qquad +\frac{x-y}{xt} +\Big(\frac{x-y}{t} -\frac{2}{x}\Big)\Big( -\frac{y}{2t+xy}+\frac{\CG x}{t\sqrt{x^2+1}}\Big)  -\frac{2y^2}{(2t+xy)^2}\\
	&\qquad-\frac{\CG^2 x^2}{t^2(x^2+1)}
	 -\frac{\CG}{t(x^2+1)^{3/2}} +\frac{2\CG xy}{t(2t+xy)\sqrt{x^2+1}}\\
	 &= \big( G(x)-1\big) \Big( \frac{1}{t} -\frac{y^2 }{t(2t+xy)} -\frac{2\CG}{t\sqrt{x^2+1}}\Big)-\frac{2y^2}{(2t+xy)^2}-\frac{2\CG}{t\sqrt{x^2+1}}	 \\
	 &\qquad-\frac{\CG}{t(x^2+1)^{3/2}}-\frac{\CG^2 x^2}{t^2 (x^2+1)} -\frac{\CG}{t^2\sqrt{x^2+1}} -\frac{\CG (xy)^2}{t^2 \sqrt{x^2+1} (2t+xy)}.
	\end{split}
\end{equation*}
	
	We consider two cases. Firstly, assume that $xy\ge t$. Then we take
	\begin{equation*}
		I_1(x,y,t) = \big( G(x)-1\big) \Big( \frac{1}{t} -\frac{y^2 }{t(2t+xy)} -\frac{2\CG}{t\sqrt{x^2+1}}\Big).
	\end{equation*}
	By Lemma \ref{lm:G} this is bounded by a constant multiple of $((x+1)t)^{-1}$. Since $x\geq\sqrt{t}$ we obtain $I_1=O(t^{-3/2})$. The remaining terms are negative and thus included in $I_2$.
	
	Secondly, assume $xy\le t$. We show that in this case $D H_t^{\psi_2^-}(x,y)<0$. For this purpose it suffices to justify that
	\begin{equation}\label{eq:8}
		\big( G(x)-1\big) \Big( 1 -\frac{y^2 }{2t+xy} -\frac{2\CG}{\sqrt{x^2+1}}\Big)-\frac{2\CG}{\sqrt{x^2+1}}	 -\frac{\CG}{(x^2+1)^{3/2}}<0.
	\end{equation}
	The left hand side of the above expression can be rewritten as
	\begin{equation*}
		\big( G(x)-1\big) \Big( 1 -\frac{y^2 }{2t+xy}\Big) -\frac{\CG}{\sqrt{x^2+1}} \Big(2G(x)+\frac{1}{x+1}\Big).
	\end{equation*}
	By Lemma \ref{lm:G} and the definition of $\CG$ \eqref{eq:CG} we see that
	\begin{equation*}
	\big| G(x)-1\big| \Big| 1 -\frac{y^2 }{(2t+xy)}\Big|\leq \frac{K}{x+1} \leq \frac{K}{\sqrt{x^2+1}} \leq \frac{\CG}{\sqrt{x^2+1}} \Big(2G(x)+\frac{1}{x+1}\Big).
	\end{equation*}
	This justifies \eqref{eq:8} and finishes the proof.
\end{proof}

We are now ready to prove the lower bound for $\hOC_t(x,y)$.

\begin{prop}\label{prop:lower_bound}
	The Opdam--Cherednik heat kernel satisfies the lower bound
	\begin{equation*}
		\hOC_t (x,y) \gtrsim H_t^0(x,y), 
	\end{equation*}
	uniformly in $t\geq 1$ and $x,y>0$.
\end{prop}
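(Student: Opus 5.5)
We apply the parabolic minimum principle (Lemma \ref{lm:PMP}) to a comparison function built from $H^-_t$. By the symmetry $\hOC_t(x,y)=\hOC_t(y,x)$ and of $H^0_t$, it suffices to treat $0<y\le x$. Fix $y>0$ and work on $\Omega=(y,\infty)$, $t\in[1,T]$ with $T$ arbitrary. By Lemma \ref{lm:lowercompar}, $H^-_t\simeq H^0_t$ in this region, so the goal becomes $\hOC_t(x,y)\ge c\,H^-_t(x,y)$ there.

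\textbf{Step 1: the glued kernel is an approximate subsolution.} I aim to show
\begin{equation*}
	D H^-_t(x,y)\le C_0\, t^{-3/2} H^-_t(x,y),\qquad x\ge y,\ t\ge1.
\end{equation*}
Write $\chi=\chi(x/t)$ and $H^-=C_1^-(1-\chi)H^{\psi_1}+\chi H^{\psi_2^-}$, and expand using \eqref{eq:D(product)}. This produces three kinds of terms.
\begin{enumerate}
	\item The terms $(1-\chi)\,DH^{\psi_1}$. These are controlled by Lemma \ref{lm:Hpsi1}, since $1-\chi\neq0$ forces $x\ge t$; they are $O(t^{-2})$ relative to $H^-$.
	\item The terms $\chi\, DH^{\psi_2^-}$. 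These are controlled by Lemma \ref{lm:Hpsi2-}, since $\chi\neq0$ forces $x\le 2t$; they split into an $O(t^{-3/2})$ part and a nonpositive part.
	\item The cutoff terms, supported in $t\le x\le 2t$. They are
	\begin{equation*}
		(D\chi)\big(H^{\psi_2^-}-C_1^-H^{\psi_1}\big)+2\,\partial_x\chi\,\partial_x\big(H^{\psi_2^-}-C_1^-H^{\psi_1}\big).
	\end{equation*}
\end{enumerate}

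\textbf{Step 2: the cutoff terms (main obstacle).} Here $\partial_x\chi=\chi'/t$ with $\chi'\le0$. Also $D\chi=-\chi' x/t^2-\chi''/t^2-(k_1\coth\frac{x}{2}+2k_2\coth x)\chi'/t$, which is $O(1/t)$ with a definite sign up to $O(t^{-2})$. Both $H^{\psi_1}$ and $H^{\psi_2^-}$ are $\simeq t^{-1/2}e^{-(x-y)^2/4t}e^{-\rho^2t}\varphi_0(x)\varphi_0(y)/(t+xy)$ on this set. Their logarithmic $x$-derivatives differ from the common factor's only by $O(1/t)$.

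So the leading part of the cutoff terms is
\begin{equation*}
	\frac{-\chi'}{t}\Big(\frac{x-y}{t}+O(1)\Big)\big(H^{\psi_2^-}-C_1^-H^{\psi_1}\big)
\end{equation*}
plus lower-order terms. Its sign is governed by the sign of $\psi_2^--C_1^-\psi_1$. The constant $C_1^-=e^{\sqrt5\,\CG}$ is exactly what makes this sign work. For $x\le 2t$ and $t\ge1$ we have $e^{\CG\sqrt{x^2+1}/t}\le e^{\sqrt5\,\CG}$, while $\psi_1\ge(2t+xy)^{-1}\cdot(\text{factor}\ge1)$ when $x\ge t$. Hence $\psi_2^-\le C_1^-\psi_1$ on the cutoff region, and the leading cutoff contribution is $\le 0$.

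The remaining cutoff pieces have relative size $O(t^{-2})$ when $y\ll x$, but only $O(t^{-1})$ when $x-y$ is small. That near-diagonal regime must be examined carefully, and it is where I expect the real difficulty. If the sign argument is not clean, my fallback is to enlarge $C_1^-$ slightly or add a factor such as $(1+1/t)$ to $\psi_1$. Either change creates a strictly negative margin of order $t^{-1}H^-$ in the cutoff zone that absorbs the error.

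\textbf{Step 3: minimum principle.} Set $a(t)=2C_0t^{-1/2}$ and $\underline H_t=e^{a(t)}H^-_t$. Then
\begin{equation*}
	D\underline H=e^{a}\big(a'H^-+DH^-\big)\le e^{a}H^-\big(-C_0t^{-3/2}+C_0t^{-3/2}\big)=0,
\end{equation*}
and $\underline H\simeq H^-$ because $1\le e^{a}\le e^{2C_0}$. Now take
\begin{equation*}
	u=\hOC_t(x,y)-\varepsilon\,\underline H_t(x,y).
\end{equation*}
Since $\hOC$ solves the heat equation, $Du=-\varepsilon D\underline H\ge0$. The boundary conditions hold as follows.
\begin{itemize}
	\item At $t=1$: Proposition \ref{prop:small_t} and Lemma \ref{lm:lowercompar} give $\hOC_1\gtrsim H^0_1\simeq\underline H_1$.
	\item On the diagonal $x=y$: Proposition \ref{prop:diagonal} gives $\hOC_t(y,y)\gtrsim H^0_t(y,y)$.
	\item At $x\to\infty$: both functions tend to $0$ because of the Gaussian factor. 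To be rigorous I run the argument on $(y,R)$, where $u\ge -\eta$ at $x=R$ for $R$ large, apply the principle to $u+\eta$, and let $\eta\to0$.
\end{itemize}
With $\varepsilon$ small, Lemma \ref{lm:PMP} yields $u\ge0$, hence $\hOC_t\gtrsim H^0_t$ for $x\ge y$, and symmetry finishes the proof.
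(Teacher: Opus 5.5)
Your overall architecture is the paper's. You reduce to $y\le x$ and glue $H^{\psi_1}_t$ and $H^{\psi_2^-}_t$ with $\chi(x/t)$. Off the transition zone you use Lemmas \ref{lm:Hpsi1} and \ref{lm:Hpsi2-}, and on $t\le x\le 2t$ you use $C_1^-=e^{\sqrt5\,\CG}$ to get $C_1^-H^{\psi_1}_t\ge H^{\psi_2^-}_t$. You close with a factor $e^{A/\sqrt t}$ and Lemma \ref{lm:PMP}, with data at $t=1$ and on the diagonal; your truncation at $x=R$ is a welcome extra bit of care.

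The gap is Step 2, the one delicate estimate, which you leave open. You never compute the coefficient of $\Theta:=H^{\psi_2^-}_t-C_1^-H^{\psi_1}_t$ in the cutoff terms. You record it as $\frac{x-y}{t}+O(1)$ with an $O(1)$ of unknown sign, so you cannot conclude that the part of relative size $t^{-1}$ is nonpositive, and Step 1 is not established. The fallback does not repair this: enlarging $C_1^-$ or inserting $(1+1/t)$ only makes $-\Theta$ larger. That helps only if the coefficient of $\Theta$ is already known to be nonnegative, which is precisely what is in doubt. A smaller point: for $k<1$, the inequality $\psi_2^-\le C_1^-\psi_1$ needs $x\le 2t$ as well as $x\ge t$. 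The clean bound is $\frac{\psi_2^-}{C_1^-\psi_1}\le\big(\frac{t+xy}{x+xy}\big)^k\frac{x+xy}{2t+xy}\le1$.

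The missing computation is short. For $D=\partial_t-L$ the product rule is $D(fg)=(Df)g+f\,Dg-2\,\partial_xf\,\partial_xg$. The displayed identity \eqref{eq:D(product)} has a sign slip, but the paper's computations use the minus sign. So the cutoff terms are $(D\chi)\Theta-\frac{2\chi'}{t}\partial_x\Theta$; now insert $\eth_xH^{\psi}_t=-\frac{x-y}{2t}+\frac{\varphi_0'}{\varphi_0}+\eth_x\psi$.

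The term $-\chi'x\Theta/t^2$ from $\partial_t[\chi(x/t)]$ and the term $+\chi'(x-y)\Theta/t^2$ from the Gaussian combine into $-\chi'y\,\Theta/t^2$. The drift term $-(k_1\coth\frac x2+2k_2\coth x)\chi'\Theta/t$ and the $\varphi_0'/\varphi_0$ cross term combine into $-2\chi'G(x)\Theta/(tx)$, which is $O(t^{-2})H^-_t$ because $x\ge t$. The contributions of $\eth_x\psi_1,\eth_x\psi_2^-=O(t^{-1})$ and of $\chi''/t^2$ are also $O(t^{-2})H^-_t$. Hence
\[
\frac{DH^-_t}{H^-_t}=\frac{C_1^-(1-\chi)H^{\psi_1}_t}{H^-_t}\cdot\frac{DH^{\psi_1}_t}{H^{\psi_1}_t}+\frac{\chi H^{\psi_2^-}_t}{H^-_t}\cdot\frac{DH^{\psi_2^-}_t}{H^{\psi_2^-}_t}+\frac{y}{t^2}\cdot\frac{\chi'\big(C_1^-H^{\psi_1}_t-H^{\psi_2^-}_t\big)}{H^-_t}+O(t^{-2}),
\]
and the third term is $\le0$ since $\chi'\le0$ and $C_1^-H^{\psi_1}_t\ge H^{\psi_2^-}_t$.

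So the near-diagonal difficulty is an artifact. The only piece of relative size $t^{-1}$ is proportional to $y$, is largest exactly near the diagonal, and has the right sign there. Every sign-indefinite piece is $O(t^{-2})$ uniformly. This is the paper's argument; with it your bound $DH^-_t\le C_0t^{-3/2}H^-_t$ holds and your Step 3 goes through as written.
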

\begin{proof}
	Since the kernels $\hOC_t$ and $H_t^0$ are symmetric, it suffices to prove the bound for $0<y\le x<\infty$. Moreover, by Lemma \ref{lm:lowercompar} we are reduced to showing that
	\begin{equation*}
		\hOC_t(x,y) \gtrsim H_t^-(x,y),\qquad 0<y\le x<\infty,\ t\geq 1.
	\end{equation*}
	
	Observe that
	\begin{align}\label{eq:2}
		\begin{split}
		\eth_x H_t^{\psi_1} (x,y) &= -\frac{x-y}{2t} + \frac{\varphi_0'(x)}{\varphi_0(x)} +\frac{k t}{x(t+xy)} - \frac{1}{x},\\
		\eth_x H_t^{\psi_2^-}(x,y) &=-\frac{x-y}{2t} +\frac{\varphi_0'(x)}{\varphi_0(x)} - \frac{y}{2t+xy} + \frac{\CG x}{t\sqrt{x^2+1}},
		\end{split} 
	\end{align}
	and
	\begin{equation}\label{eq:3}
		D \chi = -\frac{x}{t^2} \chi' -\frac{\chi''}{t^2} -\frac{k_1\coth(\frac{x}{2}) \chi'}{t}-\frac{2k_2\coth(x) \chi'}{t}.
	\end{equation}
	Note that $\chi$ and its derivatives are always evaluated at $x/t$.
	
	Thus, \eqref{eq:D(product)} implies
	\begin{equation*}
		\begin{split}
		&D H^-_t= C_1^-(1-\chi) D H_t^{\psi_1} + \chi D H_t^{\psi_2^-} +\frac{\chi''}{t^2}(C_1^- H_t^{\psi_1}-H_t^{\psi_2^-})\\
		&\ + C_1^-\chi' H_t^{\psi_1}\Big( \frac{x}{t^2}+\frac{k_1\coth \frac{x}{2}}{t} +\frac{2k_2\coth x}{t} -\frac{x-y}{t^2} + \frac{2\varphi'_0(x)}{t\varphi_0(x)}+\frac{2k}{x(t+xy)}-\frac{2}{tx}\Big)\\
		&\ -\chi' H_t^{\psi_2^-} \Big( \frac{x}{t^2}+\frac{k_1\coth \frac{x}{2}}{t} + \frac{2k_2\coth x}{t} -\frac{x-y}{t^2} +\frac{2\varphi'_0(x)}{t\varphi_0(x)} - \frac{2y}{t(2t+xy)} \\
		&\qquad+\frac{2 \CG x}{t^2 \sqrt{x^2+1}}\Big).
		\end{split}
	\end{equation*}
	Hence,
	\begin{equation*}
		\begin{split}
			\frac{D H_t^-}{H_t^-} &=\frac{C_1^-(1-\chi)H_t^{\psi_1}}{H_t^-} \cdot \frac{D H_t^{\psi_1}}{H_t^{\psi_1}} + \frac{\chi H_t^{\psi_2^-}}{H_t^-}\cdot \frac{D H_t^{\psi_2^-}}{H_t^{\psi_2^-}} + \frac{\chi''}{t^2}\cdot \frac{C_1^-H_t^{\psi_1}-H_t^{\psi_2^-}}{H_t^-}\\
			&\qquad+ \frac{y}{t^2} \cdot\frac{\chi' (C_1^-H_t^{\psi_1}-H_t^{\psi_2^-})}{H_t^-} + \frac{C_1^-\chi' H_t^{\psi_1}}{H_t^-}\Big( \frac{2(G(x)-1)}{tx} + \frac{2k}{x(t+xy)}\Big)\\
			&\qquad -\frac{\chi' H_t^{\psi_2^-}}{H_t^-}\Big( \frac{2(G(x)-1)}{tx} +\frac{2}{tx} -\frac{2y}{t(2t+xy)} +\frac{2\CG x}{t^2\sqrt{x^2+1}}\Big).
		\end{split}
	\end{equation*}
	
	Notice that $\chi',\chi''\neq 0$ if, and only, if $t\le x\le 2t$. In that range we have
	\begin{equation*}
		H_t^{\psi_1}(x,y)\simeq H_t^{\psi_2^-}(x,y)\simeq H_t^-(x,y).
	\end{equation*}
	 Moreover, $|\chi'|,|\chi''|\lesssim 1$ with the underlying constants independent of $t,x,y$. Thus,
	\begin{equation}\label{eq:4}
		\frac{|\chi'| H_t^{\psi_1}}{H_t^-}\cdot\Big| \frac{2(G(x)-1)}{tx} + \frac{2k}{x(t+xy)}\Big|\lesssim \frac{|\chi'|}{tx}\lesssim t^{-2},
	\end{equation}
	and
	\begin{equation*}
		\frac{|\chi'| H_t^{\psi_2^-}}{H_t^-} \cdot \Big| \frac{2(G(x)-1)}{tx} +\frac{2}{tx} -\frac{2y}{t(2t+xy)} +\frac{2\CG x}{t^2\sqrt{x^2+1}}\Big|\lesssim \frac{|\chi'|}{tx} + \frac{|\chi'|}{t^2}\lesssim t^{-2}.
	\end{equation*}
	Moreover,
	\begin{equation*}
		\frac{|\chi''|}{t^2}\cdot \frac{|C_1^-H_t^{\psi_1}-H_t^{\psi_2^-}|}{H_t^-}\lesssim t^{-2}.
	\end{equation*}
	
	A key observation is that $\chi'\leq 0$ and for $1\le t\le x\le 2t$ it holds that
	\begin{equation*}
		\frac{H_t^{\psi_2^-}}{C_1^- H_t^{\psi_1}} =\frac{(t+xy)^k e^{\frac{\CG \sqrt{x^2+1}}{t}}}{C_1^- (x+xy)^{k-1} (2t+xy)}\leq \frac{e^{\sqrt{5}\CG}}{C_1^-} \cdot\Big(\frac{t+xy}{x+xy}\Big)^k\leq 1, 
	\end{equation*}
	where we used the definition \eqref{eq:C_1-} of $C^-_1$. Thus, $\chi'(C_1^-H_t^{\psi_1}-H_t^{\psi_2^-})$ is negative.
	
	Combining the above, together with Lemmas \ref{lm:Hpsi1} and \ref{lm:Hpsi2-} we obtain
	\begin{equation*}
		\frac{D H_t^-(x,y)}{H_t^-(x,y)} = J_{1}(x,y,t) + J_2(x,y,t),
	\end{equation*}
	where $|J_1| \lesssim t^{-3/2}$ and $J_2\leq 0$.

	Now let us denote
	\begin{equation*}
		H_t(x,y) = A_1\hOC_t(x,y) - e^{\frac{A_2}{\sqrt{t}}} H_t^-(x,y),
	\end{equation*}
	where $A_1,A_2$ are positive constants to be determined later. Our task is to justify that $H_t(x,y)\geq 0$ for $t\geq 1$ and $0<y\le x<\infty$. For that purpose we apply the parabolic minimum principle, i.e. Lemma \ref{lm:PMP}, to $H_t$, on the set $[y,\infty)\times [1,T]$, where $y>0$ is fixed and $T\geq 1$ is arbitrarily large.
	
	To verify the required assumptions observe that
	\begin{equation*}
		\frac{D H_t(x,y)}{e^{\frac{A_2}{\sqrt{t}}} H_t^-(x,y) }= \frac{A_2}{2 t^{3/2}} - \frac{D H_t^-(x,y)}{H_t^-(x,y)}\geq \frac{A_2}{2 t^{3/2}} - J_1(x,y,t).
	\end{equation*}	
	Since $J_1=O(t^{-3/2})$, for $A_2$ large enough we have $D H_t^-(x,y)\geq 0$ in $(x,t)\in [y,\infty)\times [1,\infty)$.
	To check the boundary conditions observe that, by Propositions \ref{prop:small_t} and \ref{prop:diagonal}, and Lemma \ref{lm:lowercompar}, the kernel $\hOC_t(x,y)$ is comparable with $H_t^-(x,y)$ for $t=1$ or $x=y$, thus
	\begin{equation*}
		H_1(x,y) \geq 0 \qquad \text{and} \qquad H_t(x,x)\geq 0
	\end{equation*}
	for $0<y\le x<\infty,\ t\geq 1$, and $A_1$ large enough. Therefore, $H_t(x,y)\geq 0$ in that range, and the proof is completed.
	
\end{proof}

\subsection{Large time, upper bound}

Let $\chi$ be as previously. We define
\begin{equation*}
	H_t^+(x,y) = C_1^+ \Big[1-\chi\Big(\frac{x}{t}\Big)\Big] H_t^{\psi_1}(x,y) + \chi\Big(\frac{x}{t}\Big)  H_t^{\psi_2^+}(x,y) ,
\end{equation*}
where
\begin{equation}\label{eq:C_1+}
	C_1^+=\frac{1-e^{-1}}{\max(2^{k-1},1)},
\end{equation}
 $\psi_1 $ is defined in \eqref{eq:psi1_psi2-}, and $\psi_2^+=\psi_{2,1}^+ + \psi_{2,2}^+$, where
\begin{align*}
	\psi_{2,1}^+(x,y,t) &= \frac{1}{t} e^{\frac{-2xy+y^2}{4t}} e^{-\frac{2\CG\sqrt{x^2+1}}{t} } e^{\frac{4\sqrt{5}\CG+1}{2}},\\
	\psi_{2,2}^+(x,y,t) &= \frac{1-e^{-\frac{xy}{t}}}{xy}.
\end{align*}

We begin by showing that $H_t^+$ is comparable with the desired bound. Much as above, it is sufficient to consider $y\le x$.

\begin{lm}\label{lm:uppercompar}
	It holds that
	\begin{equation*}
		H_t^+(x,y)\simeq H_t^0(x,y),
	\end{equation*}
	uniformly in $t\geq 1$ and $0<y\le x<\infty$.
\end{lm}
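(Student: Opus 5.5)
As in the proof of Lemma \ref{lm:lowercompar}, all factors of $H^\psi_t$ other than $\psi$ are common to $H^+_t$ and $H^0_t$. So it suffices to show that
\begin{equation*}
	\psi(x,y,t):=C_1^+\Big[1-\chi\Big(\frac{x}{t}\Big)\Big]\psi_1(x,y,t)+\chi\Big(\frac{x}{t}\Big)\big(\psi_{2,1}^+ +\psi_{2,2}^+\big)(x,y,t)\simeq \psi_0(x,y,t)
\end{equation*}
for $t\ge1$ and $0<y\le x$. Every summand is nonnegative, since $1-e^{-s}\ge0$. Hence for the upper bound each term may be estimated separately. For the lower bound it is enough to exhibit one summand that is $\gtrsim\psi_0$. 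We split into the three regions $x\le t$, $x\ge 2t$ and $t\le x\le 2t$, as before.

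\textbf{Region $x\ge 2t$.} Here $\psi=C_1^+\psi_1$. Since $x\ge 2t\ge 2$ and $y\le x$, we have $t+1+x+y+xy\simeq x+xy$. Hence $\psi_1\simeq\psi_0$, exactly as in Lemma \ref{lm:lowercompar}.

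\textbf{Region $x\le t$.} Here $\psi=\psi_{2,1}^++\psi_{2,2}^+$ and $\psi_0\simeq (t+xy)^{-1}$, because $1+x+y+xy\lesssim t+xy$ when $y\le x\le t$. For $\psi_{2,2}^+$ we use the elementary bound $(1-e^{-s})/s\simeq (1+s)^{-1}$ with $s=xy/t$, which gives $\psi_{2,2}^+\simeq (t+xy)^{-1}$. This term alone yields both bounds, so it remains only to check that $\psi_{2,1}^+\lesssim (t+xy)^{-1}$. The factor $e^{-2\CG\sqrt{x^2+1}/t}$ is at most $1$, and the constant $e^{(4\sqrt5\CG+1)/2}$ is harmless. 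Since $y\le x$, we have $-2xy+y^2\le -xy$, so
\begin{equation*}
	\psi_{2,1}^+\lesssim t^{-1}e^{-\frac{xy}{4t}}\lesssim \frac{1}{t+xy},
\end{equation*}
because $(1+s)e^{-s/4}$ is bounded.

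\textbf{Region $t\le x\le 2t$.} Here $\psi_1$, $\psi_{2,2}^+$ and $\psi_0$ are all comparable to $(t+xy)^{-1}$. For $\psi_1$ this uses $x\simeq t$, so that $x+xy\simeq t+xy$. The bound $\psi_{2,1}^+\lesssim (t+xy)^{-1}$ holds by the same argument as in the previous region. The cutoff weights $1-\chi$ and $\chi$ lie in $[0,1]$ and sum to $1$, so $\psi\simeq (t+xy)^{-1}$.

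Every step is elementary. The one point needing care, and the one I expect to be the only obstacle, is the bound on $\psi_{2,1}^+$: its exponent $(-2xy+y^2)/(4t)$ is not sign-definite in general, and the restriction $y\le x$ is exactly what makes it at most $-xy/(4t)$. No lower bound on $\psi_{2,1}^+$ is needed, since $\psi_{2,2}^+$ already supplies one wherever $\chi\neq0$. The specific values of $C_1^+$ and of the constant in $\psi_{2,1}^+$ play no role here; they presumably matter only for the sign of $DH_t^+$ later.
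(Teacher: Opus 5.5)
Your proof is correct. It follows the paper's outer structure: reduce to $\psi$, split into $x\le t$, $t\le x\le 2t$, $x\ge 2t$. The difference is in how you handle $\psi_2^+=\psi_{2,1}^++\psi_{2,2}^+$.

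The paper splits further into $xy\ge t$ and $xy\le t$ and bounds both pieces from both sides in each case. For $xy\ge t$ its lower bound comes from $\psi_{2,2}^+\ge (1-e^{-1})/(xy)$. For $xy\le t$ it comes from $\psi_{2,1}^+\ge 1/t$, which needs $x\le 2t$ (so that $\sqrt{x^2+1}\le\sqrt5\,t$) and the specific constant $e^{(4\sqrt5\CG+1)/2}$.

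You instead observe that $\psi_{2,2}^+=t^{-1}(1-e^{-s})/s$ with $s=xy/t$ is comparable to $(t+xy)^{-1}$ for all $x,y,t$. Then only an upper bound on $\psi_{2,1}^+$ is needed, and that uses nothing but $y\le x$. This removes the inner case split. It also shows that $\psi_2^+\simeq (t+xy)^{-1}$ holds without the restriction $x\le 2t$. That confirms your remark that the chosen constants matter only for the sign arguments in Lemma \ref{lm:Hpsi2+} and Proposition \ref{prop:upper_bound}.

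What the paper's version records in addition is the explicit inequality \eqref{eq:5}. Its lower constant $1-e^{-1}$ is used later, in the proof of Proposition \ref{prop:upper_bound}, to check $C_1^+\psi_1\le\psi_2^+$ on $t\le x\le 2t$. Your route gives the same constant at no extra cost, since $(1-e^{-s})/s\ge (1-e^{-1})/(1+s)$ for all $s>0$. This yields $\psi_2^+\ge\psi_{2,2}^+\ge (1-e^{-1})/(t+xy)$. If your argument were to replace the paper's, it would be worth stating this explicitly.
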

\begin{proof}
	It suffices to prove that
	\begin{equation*}
	\psi_2^+ (x,y,t)\simeq \frac{1}{t+xy},
	\end{equation*}
	uniformly in $t\geq 1$ and $0<y\le x \le 2t$, and then proceed much as in the proof of Lemma \ref{lm:lowercompar}.
	
	We consider two cases. Firstly, assume that $xy\geq t$. By the simple inequality $e^{-s}\leq s^{-1}$, we obtain 
	\begin{equation*}
		0\leq \psi_{2,1}^+(x,y,t)  \leq \frac{4 e^{\frac{4\sqrt{5}\CG+1}{2}}}{xy}\qquad \text{and}\qquad \frac{1-e^{-1}}{xy}\le \psi_{2,2}^+(x,y,t) \le \frac{1}{xy}.
	\end{equation*}
	Secondly, assume $xy\le t$. This time by the inequality $1-e^{-s}\leq s$, $s>0$, we get
	\begin{equation*}
		\frac{1}{t}\le\psi_{2,1}^+(x,y,t)\le e^{ \frac{4\sqrt{5}\CG+1}{2} }\cdot \frac{1}{t} \qquad\text{and} \qquad 0\le \psi_{2,2}^+(x,y,t) \leq \frac{1}{t}.
	\end{equation*}
	
	Combining the above we conclude
	\begin{equation}\label{eq:5}
		\frac{1-e^{-1}}{t+xy}\le\psi_2^+(x,y,t)\leq \frac{2(4e^{ \frac{4\sqrt{5}\CG+1}{2} }+1)}{t+xy}.
	\end{equation}
\end{proof}

Much as for the lower bound, we study the action of the heat operator $D$ on components of $H_t^+$.

\begin{lm}\label{lm:Hpsi2+}
	Let $t\geq 1$ and $0<y\le x<\infty$ be such that $x\le 2t$. The following identity holds:
	\begin{equation}\label{eq:9}
		\frac{D H_t^{\psi_2^+}(x,y)}{H_t^{\psi_2^+}(x,y)}=I_1(x,y,t) +I_2(x,y,t),
	\end{equation}
	where $|I_1(x,y,t)|\lesssim t^{-3/2}$ and $I_2(x,y,t)\geq 0$.
\end{lm}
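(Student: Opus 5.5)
Since $D$ is linear and $\psi_2^+=\psi_{2,1}^++\psi_{2,2}^+$, I would write
\begin{equation*}
	\frac{D H_t^{\psi_2^+}}{H_t^{\psi_2^+}}=\frac{\psi_{2,1}^+}{\psi_2^+}\cdot\frac{D H_t^{\psi_{2,1}^+}}{H_t^{\psi_{2,1}^+}}+\frac{\psi_{2,2}^+}{\psi_2^+}\cdot\frac{D H_t^{\psi_{2,2}^+}}{H_t^{\psi_{2,2}^+}} .
\end{equation*}
The weights $\psi_{2,j}^+/\psi_2^+$ lie in $[0,1]$, and by the proof of Lemma \ref{lm:uppercompar} they are explicitly controlled. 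In the region $xy\le t$ one has $\psi_{2,1}^+\ge 1/t\ge\psi_{2,2}^+$, so the first weight is at least $1/2$. For $xy\ge t$ the second weight is bounded below by a constant.

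Each ratio $DH_t^{\psi_{2,j}^+}/H_t^{\psi_{2,j}^+}$ I would compute via Lemma \ref{lm:gen_form}, exactly as in Lemma \ref{lm:Hpsi2-}. For $\psi_{2,2}^+$ I would write it as $\frac{y}{t}f(s)$ with $f(s)=(1-e^{-s})/s$ and $s=xy/t$. Then, with $q(s)=se^{-s}/(1-e^{-s})\in(0,1]$,
\begin{equation*}
	\eth_t\psi_{2,2}^+=-\frac{q}{t},\qquad \eth_x\psi_{2,2}^+=\frac{q-1}{x},\qquad \eth^2_x\psi_{2,2}^+=\frac{y^2}{t^2}\,\frac{f''(s)}{f(s)} .
\end{equation*}
Substituting, the terms not involving $G-1$ become
\begin{equation*}
	-\frac{yq}{xt}+\frac{2(1-q)}{x^2}-\frac{y^2f''(s)}{t^2f(s)} .
\end{equation*}
The middle term is nonnegative. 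Since $f''/f\lesssim(1+s)^{-2}$, the two negative terms are $O(1/t)$ and essentially of size $O\bigl(y/(x(t+xy))\bigr)+O\bigl(y^2/(t+xy)^2\bigr)$. For $\psi_{2,1}^+$ the computation mirrors Lemma \ref{lm:Hpsi2-} with the opposite sign of $\CG$ and with the Gaussian correction $e^{(y^2-2xy)/(4t)}$. The latter cancels the drift term $(x-y)/t\cdot\eth_x$ against the $(x-y)^2$-type contributions. What remains are positive terms of the form $\frac{2\CG}{t\sqrt{x^2+1}}+\frac{\CG}{t(x^2+1)^{3/2}}$ and positive $O(t^{-2})$ terms, together with $(G(x)-1)$ times bounded multiples of $1/t$.

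I would then split into cases as in Lemma \ref{lm:Hpsi2-}. If $xy\ge t$, then $x\ge\sqrt t$, so every $(G(x)-1)$-term is $O\bigl(((1+x)t)^{-1}\bigr)=O(t^{-3/2})$ by Lemma \ref{lm:G}, and these go into $I_1$. In this regime the negative terms coming from $\psi_{2,2}^+$ satisfy $y^2/(t+xy)^2\le y/(xt)$, which is comparable to $1/(xt)$ only when $y\simeq x$. So the remaining negative terms still have to be absorbed: by the $1/(xt)$-size positive contributions $2(1-q)/x^2$ where $s$ is large, and by the positive $\psi_{2,1}^+$ terms weighted by $\psi_{2,1}^+/\psi_2^+$. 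If $xy\le t$, the $\psi_{2,1}^+$ piece has weight at least $1/2$. Then, exactly as in \eqref{eq:8}, the bound $|G(x)-1|\le K/(x+1)$ and the definition \eqref{eq:CG} of $\CG$ show that its positive $\CG$-terms dominate its own $(G-1)$-terms. They should also dominate the $O\bigl(y/(xt)+y^2/t^2\bigr)$ negative contributions of $\psi_{2,2}^+$, which carry weight at most $1/2$ and for $xy\le t$, $y\le x$ are $\lesssim 1/(t(1+x))$ up to $O(t^{-3/2})$. This is where the large prefactor $e^{(4\sqrt5\CG+1)/2}$ in $\psi_{2,1}^+$ enters: it tilts the weights so that the positive part wins.

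The main obstacle will be the transitional regime $xy\simeq t$ with $y\simeq x\simeq\sqrt t$. There the negative term $-yq/(xt)$ is genuinely of order $1/t$, not $t^{-3/2}$, and it must be cancelled exactly rather than estimated. I would first try to show that the $\psi_{2,1}^+$ Gaussian factor produces a compensating $+y^2/(4t^2)$-type term (from $\eth_t$) and a $+y/(xt)$-type term (from the cross term $(x-y)/t\cdot\eth_x$), and then verify with the explicit weights that the combined coefficient is nonnegative for all $s>0$. Failing an exact identity, I would reduce the check to a one-variable inequality in $s$ and $y/x\in(0,1]$ using the monotonicity of $q$.
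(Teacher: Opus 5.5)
There is a genuine gap, and it sits in the $\psi_{2,2}^+$ piece. Your log-derivatives and the resulting $(G-1)$-free part $-\frac{yq}{xt}+\frac{2(1-q)}{x^2}-\frac{y^2f''(s)}{t^2f(s)}$ are correct, but this expression is identically zero, and you did not see this. Multiplied by $x^2$ it equals $-sq+2(1-q)-s^2f''(s)/f(s)$. Differentiating $sf(s)=1-e^{-s}$ twice gives $(sf)''=-(sf)'$, i.e. $s^2f''=-s^2e^{-s}-2sf'$, hence $s^2f''/f=2(1-q)-sq$. Conceptually, $t^{-1/2}e^{-\frac{(x-y)^2}{4t}}\psi_{2,2}^+=t^{-1/2}\bigl(e^{-\frac{(x-y)^2}{4t}}-e^{-\frac{(x+y)^2}{4t}}\bigr)/(xy)$ solves the heat equation for $\partial_x^2+\frac{2}{x}\partial_x$, so $D$ only detects the drift mismatch $\frac{2}{x}(1-G(x))$. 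Thus $DH_t^{\psi_{2,2}^+}/H_t^{\psi_{2,2}^+}=(G(x)-1)M/t$ with $M=\frac{x-y}{x}+\frac{2t(1-q)}{x^2}\in[0,1]$, which is the identity the paper's proof rests on.

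Without this identity, your plan to absorb the ``negative terms'' cannot work. Take $x=y$ with $1\ll x\le\sqrt t$. Then $-\frac{yq}{xt}=-\frac qt\le-\frac{1}{(e-1)t}$, and the weight $\psi_{2,2}^+/\psi_2^+$ is bounded below by a positive constant depending only on $\CG$. The positive $\CG$-terms of the $\psi_{2,1}^+$-piece, by contrast, are only of size $\CG/(t(1+x))$, so no fixed prefactor in $\psi_{2,1}^+$ makes them win as $x\to\infty$. Your claim that for $xy\le t$ these contributions are $\lesssim 1/(t(1+x))$ up to $O(t^{-3/2})$ is therefore false, and the bad regime is not only $x\simeq y\simeq\sqrt t$.

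Your fallback also fails. The $y^2/(4t^2)$- and $y/(xt)$-type terms produced by the Gaussian correction cancel inside the $\psi_{2,1}^+$ computation itself, which yields exactly $\frac{G-1}{t}+\frac{2\CG}{t\sqrt{x^2+1}}\bigl(2G+\frac{1}{x^2+1}\bigr)+\frac{2\CG}{t^2\sqrt{x^2+1}}-\frac{4\CG^2x^2}{t^2(x^2+1)}$. They would also carry the wrong weight. The compensation you are looking for comes, exactly, from your own middle term $\frac{2(1-q)}{x^2}$.

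Once you have the exact formula, your case split goes through as in the paper. For $xy\ge t$, everything except $O(t^{-2})$ is $O\bigl(1/(t(1+x))\bigr)=O(t^{-3/2})$, since $x\ge\sqrt t$. For $xy\le t$, one uses $\psi_{2,2}^+/\psi_{2,1}^+\le 1$; this is where the prefactor $e^{(4\sqrt5\CG+1)/2}$ and the restriction $x\le 2t$ enter. Together with $M\le 1$ this gives $\bigl|(G-1)\bigl(1+\frac{\psi_{2,2}^+}{\psi_{2,1}^+}M\bigr)\bigr|\le\frac{2K}{x+1}\le\frac{2\CG}{\sqrt{x^2+1}}\bigl(2G+\frac{1}{x^2+1}\bigr)$ by \eqref{eq:CG}. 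The combined bracket is then nonnegative and goes into $I_2$. The $O(t^{-2})$ remainder goes into $I_1$; note it includes the negative term $-\frac{4\CG^2x^2}{t^2(x^2+1)}$, which you listed as positive.
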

\begin{proof}
	We start by computing $D H_t^{\psi_{2,i}^+}/H_t^{\psi_{2,i}^+}$, $i=1,2$. For that purpose observe that
	\begin{align*}
		\eth_t \psi_{2,1}^+ (x,y,t) &= -\frac{1}{t} +\frac{2xy-y^2}{4t^2} +\frac{2\CG \sqrt{x^2+1}}{t^2},\\
		\eth_x \psi_{2,1}^+(x,y,t) &= -\frac{y}{2t} -\frac{2\CG x}{t\sqrt{x^2+1}},\\
		\eth_x^2 \psi_{2,1}^+ (x,y,t) &= \frac{y^2}{4t^2} +\frac{2\CG xy}{t^2 \sqrt{x^2+1}}+ \frac{4\CG^2 x^2}{t^2 (x^2+1)}-\frac{2\CG}{t(x^2+1)^{3/2}}.
	\end{align*}
	Thus, by Lemma \ref{lm:gen_form} we have
	\begin{equation*}
		\begin{split}
		\frac{D H_t^{\psi_{2,1}}}{H_t^{\psi_{2,1}}}&= \big( G(x)-1\big) \Big( \frac{x-y}{xt} +\frac{y}{xt} +\frac{4\CG}{t\sqrt{x^2+1}}\Big)  -\frac{1}{t} +\frac{2xy-y^2}{4t^2} +\frac{2\CG \sqrt{x^2+1}}{t^2}\\
		 &\qquad+\frac{x-y}{xt} -\Big(\frac{x-y}{t} -\frac{2}{x}\Big) \Big(\frac{y}{2t} +\frac{2\CG x}{t\sqrt{x^2+1}}\Big) -\frac{y^2}{4t^2} -\frac{2\CG xy}{t^2 \sqrt{x^2+1}}\\
		 &\qquad- \frac{4\CG^2 x^2}{t^2 (x^2+1)}+\frac{2\CG}{t(x^2+1)^{3/2}}\\
		 &=\frac{1}{t}\big(G(x)-1\big)+\frac{2\CG}{t\sqrt{x^2+1}}\Big( 2G(x)+\frac{1}{x^2+1}\Big) +\frac{2\CG}{t^2\sqrt{x^2+1}} - \frac{4\CG^2 x^2}{t^2(x^2+1)}.
		\end{split}
	\end{equation*}
	
	Similarly, for $\psi_{2,2}^+$ we have
	\begin{align*}
		\eth_t \psi_{2,2}^+ (x,y,t) &= -\frac{xy e^{-\frac{xy}{t}}}{t^2(1-e^{-\frac{xy}{t} } )},\\
		\eth_x \psi^+_{2,2}(x,y,t) &= -\frac{1}{x} +\frac{y e^{-\frac{xy}{t} }}{t(1-e^{-\frac{xy}{t} })}=-\frac{1}{x} -\frac{y}{t}+\frac{y }{t(1-e^{-\frac{xy}{t} })},\\
		\eth_x^2 \psi_{2,2}^+(x,y,t)&= \frac{2}{x^2}+\frac{y^2}{t^2}+\frac{2y}{xt}-\frac{y^2}{t^2 (1-e^{-\frac{xy}{t} })}-\frac{2y}{tx (1-e^{-\frac{xy}{t} })}.
	\end{align*}
	Hence, Lemma \ref{lm:gen_form} implies
	\begin{equation*}
		\begin{split}
		\frac{D H_t^{\psi_{2,2}^+}}{H_t^{\psi_{2,2}^+}}&= \big(G(x)-1)\Big(\frac{x-y}{xt} +\frac{2}{x^2} -\frac{2y e^{ -\frac{xy}{t}}}{tx (1-e^{-\frac{xy}{t} })}\Big) -\frac{xy e^{-\frac{xy}{t}}}{t^2(1-e^{-\frac{xy}{t} } )}\\
		&\qquad +\frac{x-y}{xt} -\Big(\frac{x-y}{t}-\frac{2}{x}\Big)\Big(\frac{1}{x}+\frac{y}{t}-\frac{y}{t(1-e^{-\frac{xy}{t} } ) }\Big) -\frac{2}{x^2}-\frac{y^2}{t^2}-\frac{2y}{xt}\\
		&\qquad+\frac{y^2}{t^2 (1-e^{-\frac{xy}{t} })}+\frac{2y}{tx (1-e^{-\frac{xy}{t} })}\\
		&=\big(G(x)-1\big)\frac{1}{t} \Big(1+\frac{2t}{x^2} -\frac{y\coth(\frac{xy}{2t})}{x} \Big).
		\end{split}
	\end{equation*}
	
	Let
	\begin{equation*}
		M(x,y,t):=1+\frac{2t}{x^2} -\frac{y\coth(\frac{xy}{2t})}{x} =\frac{x-y}{x} +\frac{y}{x}\Big[1+\frac{2t}{xy}-\coth\Big(\frac{xy}{2t}\Big)\Big].
	\end{equation*}
	Notice that, by the inequality $0<1+1/s-\coth(s)<1$, $s>0$, we obtain
	\begin{equation}\label{eq:M}
		0\leq M(x,y,t)\leq 1.
	\end{equation}
	
	By combining the above we obtain
	\begin{multline*}
			\frac{D H_t^{\psi_2^+}}{H_t^{\psi_2^+}} = \frac{\psi_{2,1}^+}{\psi_2^+} \Big[ \big(G(x)-1\big)\frac{1}{t} + \frac{2\CG}{t\sqrt{x^2+1}}\Big(2G(x)+\frac{1}{x^2+1}\Big)\Big]\\
		+\frac{\psi_{2,2}^+}{\psi_2^+} \big(G(x)-1\big) \frac{M(x,y,t)}{t}+\frac{\psi_{2,1}^+}{\psi_2^+}\Big( \frac{2\CG}{t^2\sqrt{x^2+1}} -\frac{4\CG^2 x^2}{t^2(x^2+1)}\Big).
	\end{multline*}
	Denote
	\begin{equation*}
		\begin{split}
			I(x,y,t)=\frac{\psi_{2,1}^+}{t\psi_2^+} \Big[ \big(G(x)-1\big)\Big(1+\frac{\psi_{2,2}^+}{\psi_{2,1}^+} M(x,y,t) \Big)+ \frac{2\CG}{\sqrt{x^2+1}}\Big(2G(x)+\frac{1}{x^2+1}\Big)\Big].
		\end{split}
	\end{equation*}
	Clearly, the remaining summands in the formula for $D H_t^{\psi_2^+}/H_t^{\psi_2^+}$ are $O(t^{-2})$. 
	
	We consider two cases. Firstly, assume $xy\geq t$. Then, $x\geq \sqrt{t}$ and, by Lemma \ref{lm:G} and \eqref{eq:M} we obtain
	\begin{equation*}
	\big|I(x,y,t)\big|\lesssim \frac{1}{t(x+1)}=O(t^{-3/2}).
	\end{equation*}
	Therefore, \eqref{eq:9} holds.
	
	Secondly, if $xy\le t$, then (recall that $x\le 2t$)
	\begin{equation*}
		\frac{\psi_{2,2}^+}{\psi_{2,1}^+} = \frac{(1-e^{-\frac{xy}{t}}) t e^{\frac{2xy-y^2}{4t}} e^{\frac{2\CG \sqrt{x^2+1}}{t} }}{xy e^{\frac{4\sqrt{5}\CG+1}{2} }} \leq \frac{e^{\frac{xy}{2t}}}{\sqrt{e}}\leq 1.
	\end{equation*}
	Thus, Lemma \ref{lm:G}, \eqref{eq:M}, and the definition \eqref{eq:CG} of $\CG$ imply
	\begin{equation*}
		\Big| \big(G(x)-1\big)\Big(1+\frac{\psi_{2,2}^+}{\psi_{2,1}^+} M(x,y,t) \Big)\Big|\leq \frac{2K}{x+1} \leq \frac{2K}{\sqrt{x^2+1}}\leq \frac{2\CG}{\sqrt{x^2+1}}\Big(2G(x)+\frac{1}{x^2+1}\Big).
	\end{equation*}
	This shows that $I(x,y,t)\geq 0$ and \eqref{eq:9} holds as well.
		
\end{proof}

We will make use of the following bound on $\eth_x \psi_2^+$.

\begin{lm}\label{lm:psi_21+psi_22}
	We have
	\begin{equation*}
	\big| \eth_x \psi_2^+(x,y,t)\big| \lesssim t^{-1},
	\end{equation*}
	uniformly in $t\geq 1$ and $0<y\le x<\infty$ satisfying $t\le x\le 2t$.
\end{lm}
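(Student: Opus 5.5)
Since $\psi_2^+=\psi_{2,1}^++\psi_{2,2}^+$ with both summands nonnegative, we have
\begin{equation*}
	\eth_x\psi_2^+=\frac{\psi_{2,1}^+}{\psi_2^+}\,\eth_x\psi_{2,1}^+ +\frac{\psi_{2,2}^+}{\psi_2^+}\,\eth_x\psi_{2,2}^+ ,
\end{equation*}
a convex combination of the two logarithmic derivatives. It therefore suffices to show $|\eth_x\psi_{2,1}^+|\lesssim t^{-1}$ and $|\eth_x\psi_{2,2}^+|\lesssim t^{-1}$ separately in the range $1\le t\le x\le 2t$, $0<y\le x$. The explicit formulas for these derivatives were already computed in the proof of Lemma \ref{lm:Hpsi2+}, so we will reuse them.

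For $\psi_{2,1}^+$ we have
\begin{equation*}
	\eth_x\psi_{2,1}^+=-\frac{y}{2t}-\frac{2\CG x}{t\sqrt{x^2+1}} .
\end{equation*}
The second term is bounded by $2\CG/t$. The first term is the delicate one, since $y$ may be as large as $x\simeq t$, which gives only an $O(1)$ bound. Here we use the weights. In the region $y\ge 1$ we have $xy\ge t$, and as in the proof of Lemma \ref{lm:uppercompar} we get $\psi_2^+\simeq (xy)^{-1}$. On the other hand,
\begin{equation*}
	\psi_{2,1}^+\lesssim t^{-1}e^{-\frac{(2x-y)y}{4t}}\le t^{-1}e^{-\frac{xy}{4t}},
\end{equation*}
using $2x-y\ge x$. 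Hence
\begin{equation*}
	\frac{\psi_{2,1}^+}{\psi_2^+}\cdot\frac{y}{t}\lesssim \frac{xy}{t}\cdot\frac{y}{t}\,e^{-\frac{xy}{4t}} .
\end{equation*}
Writing $s=xy/t\ge y$ (because $x\ge t$), this is at most $s^2e^{-s/4}/t\lesssim t^{-1}$. In the region $y\le 1$ the term $y/(2t)$ is trivially $\le t^{-1}$. Consequently the first summand of the convex combination is $O(t^{-1})$, and this weighted estimate is the main point of the proof.

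For $\psi_{2,2}^+$ we have
\begin{equation*}
	\eth_x\psi_{2,2}^+=-\frac1x+\frac{y}{t}\cdot\frac{e^{-s}}{1-e^{-s}},\qquad s=\frac{xy}{t}.
\end{equation*}
The first term satisfies $1/x\le 1/t$. For the second term we use $e^{-s}/(1-e^{-s})\le 1/s$, which gives
\begin{equation*}
	\frac{y}{t}\cdot\frac{1}{s}=\frac{1}{x}\le \frac1t .
\end{equation*}
Hence $|\eth_x\psi_{2,2}^+|\le 2/t$, and since the weight $\psi_{2,2}^+/\psi_2^+$ is at most $1$, the second summand is also $O(t^{-1})$. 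Combining the two summands yields $|\eth_x\psi_2^+|\lesssim t^{-1}$ uniformly in the stated range.
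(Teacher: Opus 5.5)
Your proof is correct and is essentially the paper's argument. The paper bounds $|\partial_x\psi_{2,1}^+ + \partial_x\psi_{2,2}^+|\lesssim \frac{1}{t(t+xy)}$ separately for $xy\le t$ and $xy\ge t$, then divides by $\psi_2^+\simeq (t+xy)^{-1}$; this is the same bookkeeping as your weighted log-derivatives, with the same key point that the decay $e^{-xy/(4t)}$ of $\psi_{2,1}^+$ absorbs the factor $y/t$.

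One expository fix: drop the claim that it suffices to show $|\eth_x\psi_{2,1}^+|\lesssim t^{-1}$ without the weight, since that bound is false when $y\simeq t$. What you actually prove, correctly, is the weighted estimate.
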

\begin{proof}
	Directly from the definition of $\psi_{2,1}^+$ and $\psi_{2,2}^+$ we have
	\begin{equation*}
		\partial_x \psi_{2,1}^+ + \partial_x\psi_{2,2}^+= -\Big(\frac{y}{2t}+\frac{2\CG x}{t\sqrt{x^2+1}}\Big)\psi_{2,1}^+ + \frac{e^{-\frac{xy}{t}}}{xt}-\frac{1-e^{-\frac{xy}{t}}}{x^2y}.
	\end{equation*}
	
	As usually, we consider two cases. Firstly, assume $xy\le t$ and, consequently, $y\le 1$. Then, by applying $\psi_{2,1}^+\lesssim t^{-1}$ we obtain
	\begin{equation*}
	\big| \partial_x \phi_{2,1}^+ + \partial_x\psi_{2,2}^+\big|\lesssim  \frac{1}{t^2}+\frac{1}{tx}\lesssim \frac{1}{t(t+xy)}. 
	\end{equation*}
	Secondly, if $xy\geq t$, then we use $\psi_{2,1}^+\lesssim t/(xy)^2$ and obtain
	\begin{equation*}
	\big| \partial_x \phi_{2,1}^+ + \partial_x\psi_{2,2}^+\big|\lesssim  \frac{1}{x^2y}+\frac{1}{x^2 y^2}\lesssim \frac{1}{t(t+xy)}. 
	\end{equation*}
	
	In the proof of Lemma \ref{lm:uppercompar} we showed that in the considered range of $x,y,t$ we have $\psi_2^+\simeq (t+xy)^{-1}$. Thus,
	\begin{equation*}
		\big| \eth_x\psi_2^+\big| \lesssim \frac{t+xy}{t(t+xy)}=\frac{1}{t},
	\end{equation*}
	which finishes the proof.
\end{proof}

We are now ready to prove the upper bound for $\hOC_t(x,y)$.

\begin{prop}\label{prop:upper_bound}
	The Opdam--Cherednik heat kernel satisfies the upper bound
	\begin{equation*}
		\hOC_t (x,y) \lesssim H_t^0(x,y), 
	\end{equation*}
	uniformly in $t\geq 1$ and $x,y>0$.
\end{prop}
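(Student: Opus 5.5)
By symmetry of $\hOC_t$ and $H^0_t$, it is enough to treat $0<y\le x<\infty$ and $t\ge 1$. By Lemma \ref{lm:uppercompar}, the claim reduces to
\begin{equation*}
\hOC_t(x,y)\lesssim H_t^+(x,y),\qquad 0<y\le x<\infty,\ t\ge 1.
\end{equation*}
This mirrors the proof of Proposition \ref{prop:lower_bound}. I would apply Lemma \ref{lm:PMP} on $[y,\infty)\times[1,T]$ to
\begin{equation*}
\widetilde H_t(x,y)=A_1e^{-\frac{A_2}{\sqrt t}}H_t^+(x,y)-\hOC_t(x,y).
\end{equation*}
The boundary conditions are immediate. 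At $t=1$, Proposition \ref{prop:small_t} together with Lemma \ref{lm:uppercompar} gives $\hOC_1\simeq H_1^+$. On the diagonal $x=y$, Proposition \ref{prop:diagonal} gives $\hOC_t(x,x)\simeq H^+_t(x,x)$. So both conditions hold once $A_1$ is large, with $A_2$ fixed beforehand.

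The main work is the sign of $D H_t^+$. I want
\begin{equation*}
\frac{DH_t^+}{H_t^+}=J_1+J_2,\qquad |J_1|\lesssim t^{-3/2},\quad J_2\ge 0.
\end{equation*}
Then
\begin{equation*}
\frac{D\widetilde H_t}{A_1e^{-A_2/\sqrt t}H_t^+}=-\frac{A_2}{2t^{3/2}}\cdot(-1)+\dots
\end{equation*}
More precisely, $\partial_t e^{-A_2/\sqrt t}=\frac{A_2}{2t^{3/2}}e^{-A_2/\sqrt t}$, so the quotient equals $\frac{A_2}{2t^{3/2}}+J_1+J_2\ge 0$ for $A_2$ large. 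This is the right sign, because we need $D\widetilde H\ge0$ and $D\hOC=0$.

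To obtain the decomposition, expand $DH_t^+$ with \eqref{eq:D(product)} and \eqref{eq:3}, exactly as in the lower bound. The terms $(1-\chi)\,DH^{\psi_1}_t$ are $O(t^{-2})$ relative to $H_t^+$ by Lemma \ref{lm:Hpsi1}. The terms $\chi\, DH^{\psi_2^+}_t$ split into $O(t^{-3/2})$ plus a nonnegative part by Lemma \ref{lm:Hpsi2+}. What remains are the cutoff terms, which live in $t\le x\le 2t$, where $H^{\psi_1}_t\simeq H^{\psi_2^+}_t\simeq H^+_t$.

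Among the cutoff terms, the $\chi''/t^2$ term is $O(t^{-2})$. The $\chi'$ terms carrying factors $(G(x)-1)/(tx)$, $1/(tx)$, and also $\eth_x\psi_2^+/t$, are $O(t^{-2})$. For the last of these I use Lemma \ref{lm:psi_21+psi_22}, which is exactly why that lemma was proved. The dangerous term is
\begin{equation*}
\frac{y}{t^2}\cdot\frac{\chi'\,(C_1^+H_t^{\psi_1}-H_t^{\psi_2^+})}{H_t^+},
\end{equation*}
since $y/t^2$ can be of order $1/t$.

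This term is the main obstacle. Because $\chi'\le0$, it is nonnegative exactly when $C_1^+H_t^{\psi_1}\le H_t^{\psi_2^+}$, that is, when $C_1^+\psi_1\le\psi_2^+$ for $t\le x\le 2t$. The constant \eqref{eq:C_1+} is designed for this. From \eqref{eq:5} we have $\psi_2^+\ge(1-e^{-1})/(t+xy)$. On the other side,
\begin{equation*}
\psi_1=\frac{1}{t+xy}\Big(\frac{x+xy}{t+xy}\Big)^{k-1},
\end{equation*}
and with $t\le x\le2t$ the ratio $\frac{x+xy}{t+xy}$ lies in $[1,2]$. Its $(k-1)$-th power is therefore at most $\max(2^{k-1},1)$. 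Hence $C_1^+\psi_1\le\psi_2^+$, and this term goes into $J_2$. If the bookkeeping of the $\varphi_0'/\varphi_0$ and $\coth$ contributions in the $\chi'$ terms produces extra pieces, I would recombine them into $G(x)$, as was done for the lower bound, and check that each leftover piece is $O(t^{-2})$. Once the sign condition holds, Lemma \ref{lm:PMP} gives $\widetilde H\ge0$, and letting $T\to\infty$ completes the proof.
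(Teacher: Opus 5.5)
Your proposal is correct and follows the paper's proof essentially step by step. You use the same comparison function $H_t^+$ with the barrier factor $e^{-A_2/\sqrt t}$ and the same decomposition $DH_t^+/H_t^+=J_1+J_2$ via Lemmas \ref{lm:Hpsi1}, \ref{lm:Hpsi2+} and \ref{lm:psi_21+psi_22}, and you make the $\frac{y}{t^2}\chi'(C_1^+H_t^{\psi_1}-H_t^{\psi_2^+})$ term nonnegative in the same way, from $\chi'\le 0$, \eqref{eq:5} and the choice \eqref{eq:C_1+} of $C_1^+$; the only cosmetic difference is that you scale $H_t^+$ by $A_1$ instead of scaling $\hOC_t$ by $1/A_1$, which is equivalent.
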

\begin{proof}
	By the symmetry of $\hOC_t$ and $H_t^0$,  Lemma \ref{lm:uppercompar}, it suffices to justify that 
	\begin{equation*}
		\hOC_t (x,y)\lesssim H_t^+(x,y),\qquad 0<y\le x<\infty,\ t\geq 1.
	\end{equation*}
	
	Much as in the proof of Proposition \ref{prop:lower_bound}, by \eqref{eq:2}, \eqref{eq:3}, and
	\begin{equation*}
		\eth_x H_t^{\psi_2^+}(x,y)=-\frac{x-y}{2t} +\frac{\varphi'_0(x)}{\varphi_0(x)} +\eth_x\psi_2^+ 
	\end{equation*}
	we have
	\begin{equation*}
		\begin{split}
			\frac{D H_t^+}{H_t^+}&= \frac{C_1^+(1-\chi) D H_t^{\psi_1}}{H_t^+} + \frac{\chi D H_t^{\psi_2^+}}{H_t^+} +\frac{\chi''}{t^2}\frac{ C_1^+ H_t^{\psi_1}-H_t^{\psi_2^+}}{H_t^+}\\
			&\hspace{-1cm} + \frac{C_1^+ \chi' H_t^{\psi_1}}{H_t^+}\Big( \frac{x}{t^2} +\frac{k_1\coth \frac{x}{2}}{t}+\frac{2k_2\coth x}{t} -\frac{x-y}{t^2} + \frac{2\varphi'_0(x)}{t\varphi_0(x)}+\frac{2k}{x(t+xy)}-\frac{2}{tx}\Big)\\
			&\hspace{-1cm} -\frac{\chi' H_t^{\psi_2^+}}{H_t^+} \Big( \frac{x}{t^2} +\frac{k_1\coth \frac{x}{2}}{t}+ \frac{2k_2\coth x}{t} -\frac{x-y}{t^2} +\frac{2\varphi'_0(x)}{t\varphi_0(x)} + \frac{2\eth_x \psi_2^+}{t} \Big)\\
			&=\frac{C_1^+ (1-\chi) D H_t^{\psi_1}}{H_t^+} + \frac{\chi D H_t^{\psi_2^+}}{H_t^+} +\frac{\chi''}{t^2}\frac{ C_1^+ H_t^{\psi_1}-H_t^{\psi_2^+}}{H_t^+}\\
			&\hspace{-1cm}+ \frac{y\chi'}{t^2} \cdot\frac{ C_1^+ H_t^{\psi_1}-H_t^{\psi_2^+}}{H_t^+} + \frac{C_1^+\chi' H_t^{\psi_1}}{H_t^+}\Big( \frac{2(G(x)-1)}{tx} + \frac{2k}{x(t+xy)}\Big)\\
			&\hspace{-1cm} -\frac{\chi' H_t^{\psi_2^+}}{H_t^+}\Big( \frac{2(G(x)-1)}{tx} +\frac{2}{tx} +\frac{2\eth_x\psi_2^+}{t}\Big).
		\end{split}
	\end{equation*}
	
	Recall that $\chi',\chi''\neq0$ if, and only, if $t\le x\le 2t$. In this range we have $H_t^{\psi_1}\simeq H_t^{\psi_2^+}\simeq H_t^+$. Much as in \eqref{eq:4} we have
	\begin{equation*}
		 \frac{|\chi'| H_t^{\psi_1}}{H_t^+}\Big| \frac{2(G(x)-1)}{tx} + \frac{2k}{x(t+xy)}\Big|\lesssim \frac{|\chi'|}{tx}\lesssim t^{-2}.
	\end{equation*}
	Similarly, by using additionally Lemma \ref{lm:psi_21+psi_22} we obtain
	\begin{equation*}
		 \frac{|\chi'| H_t^{\psi_2^+}}{H_t^+}\Big| \frac{2(G(x)-1)}{tx} +\frac{2}{tx} +\frac{2\eth_x\psi_2^+}{t}\Big|\lesssim t^{-2}.
	\end{equation*}
	Furthermore,
	\begin{equation*}
		\Big| \frac{\chi''}{t^2}\frac{ C_1^+ H_t^{\psi_1}-H_t^{\psi_2^+}}{H_t^+}\Big|\lesssim t^{-2}. 
	\end{equation*}
	Lastly, we claim that
	\begin{equation*}
		\chi'\Big(\frac{x}{t}\Big) \big(C_1^+ H_t^{\psi_1}(x,y)-H_t^{\psi_2^+}(x,y)\big)\geq 0.
	\end{equation*}
	Indeed, this follows from the fact that $\chi'\le 0$ and that for $t\le x\le 2t$ it holds that
	\begin{equation*}
		\frac{C_1^+ \psi_1(x,y,t)}{\psi_2^+(x,y,t)} \leq \frac{C_1^+}{1-e^{-1}} \Big(\frac{x+xy}{t+xy}\Big)^{k-1} \leq  1,
	\end{equation*}
	where we used \eqref{eq:5} and the definition \eqref{eq:C_1+} of $C_1^+$.

	Combining the above together with Lemmas \ref{lm:Hpsi1} and \ref{lm:Hpsi2+} we arrive at
	\begin{equation*}
		\frac{D H_t^+(x,y)}{H_t^+(x,y)}=J_1(x,y,t)+J_2(x,y,t),
	\end{equation*}
	where $J_1=O(t^{-3/2})$ and $J_2\ge 0$.
	
	Now denote
	\begin{equation*}
		H_t(x,y):=-\frac{1}{A_1}\hOC_t(x,y) + e^{-\frac{A_2}{\sqrt{t}}}H_t^+(x,y),
	\end{equation*}
	where $A_1,A_2$ are (large) positive constants. Much as in Proposition \ref{prop:lower_bound}, we apply Lemma \ref{lm:PMP} to $H_t$ on $[y,\infty)\times[1,T)$ for a fixed $y>0$ and arbitrarily large $T$.
	
	The necessary assumptions are satisfied:
	\begin{equation*}
		\frac{D H_t(x,y)}{e^{-\frac{A_2}{\sqrt{t}}} H_t^+(x,y)} = \frac{A_2}{2t^{3/2}} + \frac{D H_t^+(x,y)}{H_t^+(x,y)}\geq \frac{A_2}{2t^{3/2}} + J_1(x,y,t)\geq 0
	\end{equation*}
	 for $A_2$ large enough. Moreover, by Propositions \ref{prop:small_t} and \ref{prop:diagonal}, and Lemma \ref{lm:uppercompar}, the kernels $\hOC_t(x,y)$ and $H^+_t(x,y)$ are comparable for $t=1$ or $x=y$. Thus,
	 \begin{equation*}
	 	H_1(x,y)\geq 0\qquad \text{and} \qquad H_t(x,x)\geq 0
	 \end{equation*}
	for $0<y\le x<\infty$, $\geq 1$, and $A_1$ large enough. Thus, $H_t(x,y)\geq 0$ in that range, which finishes the proof.
	
\end{proof}

\subsection{Conclusion}

\begin{proof}[Proof of Theorem \ref{mainthm}]
	The bound on $\hOC_t(x,y)$ for $t,x,y>0$ follows immediately from Propositions \ref{prop:small_t}, \ref{prop:lower_bound}, and \ref{prop:upper_bound}.
\end{proof}

\begin{proof}[Proof of Theorem \ref{thm:Jacobi}]
	The claim follows immediately from Theorem \ref{mainthm} and \eqref{eq:OC-Jacobi}.
\end{proof}

The obtained bound for $\hOC_t(x,y)$ in rank $1$ suggests, that the conjecture in general rank posted in \cite{GraSaw25} should be reformulated as
\begin{multline*}
	\hOC_t(x,y)\simeq t^{-n/2} e^{-\frac{|x-y|^2}{4t}} e^{-|\rho|^2 t} e^{-\langle \rho, x+y\rangle}\\
	 \times\prod_{\alpha\in\Sigma^{++}} \frac{(1+\langle  \alpha,x\rangle) (1+\langle  \alpha,y\rangle) \big[t+(1+\langle  \alpha,x\rangle)(1+\langle \alpha,y\rangle)\big]^{k_\alpha + k_{2\alpha}-1}}{\big(t+\langle \alpha,x\rangle \langle \alpha,y\rangle \big)^{k_\alpha + k_{2\alpha}}}.
\end{multline*}

\end{document}